\newtheorem{theorem}{Theorem}[section]
\newtheorem{cor}[theorem]{Corollary}
\newtheorem{lemma}[theorem]{Lemma}
\newtheorem{prop}[theorem]{Proposition}
\newcommand{\pslr}{\mathrm{PSL}_2(\mathbb{R})}
\newcommand{\pslz}{\mathrm{PSL}_2(\mathbb{Z})}
\DeclareMathOperator{\arccosh}{arccosh}
\DeclareMathOperator{\tr}{tr}
\title{Hyperbolic punctured spheres without arithmetic systole maximizers}
\author{Grant S. Lakeland and Clayton Young }
\begin{document}

\maketitle

\begin{abstract}


We find bounds for the length of the systole -- the shortest essential, non-peripheral closed curve -- for arithmetic punctured spheres with $n$ cusps, for $n=4$ through $n=12$, some of which were previously known due to Schmutz. This is shown using a correspondence between such surfaces and planar triangulations. We show that for $n=7,10,11$, arithmetic surfaces do not achieve the maximal systole length. \end{abstract}

\section{Introduction}

Let $\Gamma$ be a discrete, cofinite, torsion-free subgroup of $\pslr$ with the property that the quotient surface $M=\mathbb{H}^2 / \Gamma$ has genus zero; i.e. that the surface is a punctured sphere without cone points. The \emph{systole} of $M$ is the shortest essential, non-peripheral loop on $M$. For a fixed topological type of $M$ -- which here means a fixed number of punctures $n$ -- there is much interest in the question of how long the length $\mathrm{sys}(M)$ of the systole may be among surfaces $M$ with the given topological type, and what surface(s) could achieve this maximum. 

One may suspect that a surface which achieves the maximum systole length could have a relatively large order automorphism group, given that such a surface likely has many distinct (isotopy classes of) geodesic loops which share the same length. Accordingly, one may ask whether surfaces maximizing the systole length must necessarily be arithmetic. Somewhat contrary to this heuristic, it has been shown by Fortier Bourque and Rafi \cite{FBR} that there exist hyperbolic surfaces which locally maximize the systole length and which have trivial automorphism group. In this paper, we show:

\begin{theorem}\label{thm:mainthm} For hyperbolic punctured spheres with, respectively, $n=7, 10$, and $11$ punctures, the systole length is not maximized by an arithmetic surface.\end{theorem}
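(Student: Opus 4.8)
The plan is to combine the upper bound on the arithmetic systole coming from the triangulation correspondence with the production of a single (necessarily non-arithmetic) surface whose systole is strictly longer. Write $A_n$ for the maximal systole length among arithmetic $n$-punctured spheres. By the correspondence with planar triangulations this is a finite quantity: an ideal triangulation of an $n$-punctured sphere has exactly $2n-4$ triangles and $3n-6$ edges, and a closed geodesic is encoded by its cyclic cutting sequence of left/right turns, the associated trace being that of the corresponding word $w$ in $L=\left(\begin{smallmatrix}1&0\\1&1\end{smallmatrix}\right)$ and $R=\left(\begin{smallmatrix}1&1\\0&1\end{smallmatrix}\right)$, with length $2\arccosh(\tfrac12|\tr w|)$. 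Thus $A_n=\max_T 2\arccosh\bigl(\tfrac12\min_w |\tr w|\bigr)$, the outer maximum over triangulations $T$ and the inner minimum over geodesic words $w$; note $|\tr(LR)|=3$, so the shortest geodesic on any such surface has length at least $2\arccosh(3/2)$.

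First I would single out, for each $n\in\{7,10,11\}$, the triangulation(s) $T_0$ realizing $A_n$, record the extremal arithmetic surface $M_0$, and determine its \emph{complete} set $\mathcal{S}$ of systolic geodesics, namely the cyclic words $w$ with $|\tr w|$ minimal. This uses only the finite enumeration underlying the systole bounds established in the preceding sections; the point is that I need the full set $\mathcal{S}$, not merely its cardinality.

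Next I would show that $M_0$ is not a local maximum of the systole on the Teichm\"uller space $\mathcal{T}_{0,n}$. The key elementary fact is that if some tangent vector $v$ satisfies $d\ell_\gamma(v)>0$ for every $\gamma\in\mathcal{S}$, then, since only finitely many geodesics lie below any fixed length, for small $t>0$ the flow of $v$ lengthens every systolic geodesic while no longer geodesic can drop below them, so $\mathrm{sys}$ strictly increases; geometrically (Schmutz Schaller) this is the statement that $\mathcal{S}$ fails to fill $M_0$. For $n=7,10,11$ I would exhibit such a common lengthening direction most concretely as a deformation of the shear coordinates of $T_0$ away from the symmetric (arithmetic) point. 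Producing and certifying this direction is the crux and the step I expect to be the main obstacle: it reduces to a linear-algebra check on the length gradients $\{d\ell_\gamma\}_{\gamma\in\mathcal{S}}$ verifying that their positive span is proper, and for the remaining $n$ in the range $4\le n\le 12$ the systoles do fill and no such $v$ exists, which is precisely why those cases are excluded from the theorem.

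Finally I would assemble the conclusion. The sublevel sets $\{M:\mathrm{sys}(M)\ge\varepsilon\}$ are compact in $\mathcal{M}_{0,n}$ (Mumford compactness) and the systole tends to $0$ near the boundary, so the systole attains a genuine global maximum. The lengthening deformation yields a surface $M_1$ near $M_0$ with $\mathrm{sys}(M_1)>A_n$, so the global maximum exceeds $A_n$; as every arithmetic surface has systole at most $A_n$, no arithmetic surface can realize it. In place of the explicit deformation one may instead invoke an extremal surface of Schmutz Schaller with $n$ cusps whose systole is already known to exceed the computed value $A_n$, which gives the same strict inequality and hence the same conclusion.
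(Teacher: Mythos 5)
Your overall architecture matches the paper's: bound the arithmetic systole for each $n$ via the minimum edge density of planar triangulations, then exhibit a single non-arithmetic surface with strictly longer systole, so that no arithmetic surface can attain the maximum. The conclusion step is also sound (indeed the paper does not even need Mumford compactness, since exhibiting any surface with systole exceeding the arithmetic bound already shows no arithmetic surface realizes the supremum). However, there is a genuine gap at exactly the point you flag as ``the crux'': you assert, but do not produce or certify, a common lengthening direction $v$ with $d\ell_\gamma(v)>0$ for all $\gamma\in\mathcal{S}$ on the extremal arithmetic surface for $n=7,10,11$. Without that certificate the theorem is not proved, and your fallback --- citing a known Schmutz Schaller extremal surface with systole exceeding $A_n$ --- is not available; no such surfaces are on record for these $n$. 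The paper closes this gap constructively: it writes down the generators of the extremal arithmetic group (e.g.\ the matrices $a_1,\dots,a_6$ for $n=7$), replaces them by explicit perturbed parabolic generators with rational entries (the $b_i$, resp.\ the $\alpha_i$ for $n=10,11$), computes the new traces of the formerly systolic words (e.g.\ $14.0364,\dots$ for $n=7$; $-18.1596$ for $n=10$; $\pm 36361/2020$ for $n=11$), and then invokes Lemma 2.3 of Benson--Lakeland--Then to verify globally that no other hyperbolic element has dropped to or below the old trace. That last global check is essential and is not supplied by a purely first-order argument on the finite set $\mathcal{S}$ alone unless you also control, as you sketch, all geodesics up to a length cutoff on the perturbed surface; the paper's [BLT] step is precisely that control.

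Two secondary inaccuracies. First, your claim that for the remaining $n$ in $4\le n\le 12$ ``the systoles do fill and no such $v$ exists, which is precisely why those cases are excluded'' is not what the paper establishes: for $n=4,6,12$ the Schmutz bound (\ref{bound}) is attained by $\Gamma(3)$, $\Gamma(4)$, $\Gamma(5)$, so no improvement is possible, but for $n=5,8,9$ the authors merely report that they could not find a longer non-arithmetic example --- no filling or eutaxy statement is proved there. Second, the implication ``a common lengthening direction exists $\Leftrightarrow$ $\mathcal{S}$ fails to fill'' is only one-directional (failure to fill implies such a $v$ exists, not conversely), so even the reduction to a filling check would not by itself decide the matter; the correct linear-algebra criterion is that $0$ lies outside the convex hull of the length differentials, which is what would actually need to be verified.
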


This is shown by providing a bound on the systole length among arithmetic surfaces of each given topological type. A result of Schmutz \cite{Schmutz} (see also Fanoni--Parlier \cite{FanoniParlier}) gives an upper bound on $\mathrm{sys}(M)$ given its topological type of $M$; when the surface has genus zero and $n$ cusps, where necessarily $n \geq 4$, this bound becomes
\begin{equation}\label{bound} \mathrm{sys}(M) \leq 4 \arccosh{ \left( \frac{3n-6}{n} \right)}. \end{equation}
Schmutz showed that this bound is sharp in certain cases, including punctured spheres with, respectively, $n=4$, $n=6$ and $n=12$ punctures, as in these cases the systole is maximized by the surface corresponding to a principal congruence subgroup of $\pslz$ (see Section 2). 

In this paper, we study the cases $n=4$ through $n=12$ inclusive. We find that arithmetic punctured spheres have their systole lengths restricted by the graph theory of spherical (and hence planar) triangulations, where the vertices of the triangulation correspond to the cusps of the surface, and each face of the triangulation to a triangle in a tessellation of the upper half-plane. Specifically, the trace $\tr(\gamma)$ of the corresponding hyperbolic element $\gamma$ of $\Gamma$ is related to the length $\ell$ of the loop via
\begin{equation}\label{trlength} \ell = 2 \arccosh{\left( \frac{| \mathrm{tr}(\gamma) |} {2} \right)}. \end{equation}
We show that for each edge of the triangulation, the trace of the hyperbolic element corresponding to the loop encircling the two corresponding cusps is a function of the product of the two vertex degrees, and therefore an upper bound on the minimum of such products in a triangulation gives an upper bound on the systole length.

As a consequence of this, away from the cases of $n=4,6,12$, we obtain, for arithmetic examples, a sharper upper bound on the systole length than is given by inequality (\ref{bound}). This raises the question of whether there are non-arithmetic examples with longer systole, and we give such examples in the cases of $n=7,10,11$.

The paper is organized as follows. In Section 2, we show that the study of arithmetic punctured spheres is equivalent to the study of planar triangulations. In Section 3 we show explicitly how this correspondence works, and how one may obtain a (conjugacy class of) arithmetic Fuchsian group(s) from a planar triangulation equipped with a spanning tree subgraph. In Section 4, we show how one may bound the systole from the triangulation. In Section 5, we find bounds for arithmetic examples with $n=4$ through $n=12$ cusps, and construct examples with longer systoles in the cases of $n=7,10,11$. In Section 6 we outline some open questions which follow from this work.\\

\noindent {\bf Acknowledgements.} We thank Rick Anderson for helping initiate this project, Alan Reid for suggesting a shorter proof of Lemma 2.1 than was given in an earlier draft, and Brendan McKay for graph theory help with the $8$-cusped case via comments to a MathOverflow question \cite{McKay}.

\section{Arithmetic Fuchsian groups and triangulations}

In this section, we show that it suffices to consider triangulations of the sphere, and hence of the plane, when studying arithmetic hyperbolic spheres with cusps. 

\begin{lemma}If $\Gamma$ is a non-cocompact, torsion-free, arithmetic Fuchsian group of genus zero, then $\Gamma$ is (after possibly conjugating by an element of $\mathrm{PSL}_2(\mathbb{R})$) a subgroup of $\pslz$.\end{lemma}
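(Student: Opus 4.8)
The plan is to reduce to the commensurability class of the modular group and then promote commensurability to an honest inclusion, using the genus-zero and torsion-free hypotheses to control denominators. First I would apply the standard classification of non-cocompact arithmetic Fuchsian groups (Takeuchi): because $\Gamma$ is non-cocompact it contains a parabolic, so its associated quaternion algebra splits; the totally-real ramification condition defining arithmeticity then forces the invariant trace field to be $\mathbb{Q}$ and the algebra to be $M_2(\mathbb{Q})$, whence $\Gamma$ is commensurable, up to conjugacy in $\pslr$, with $\pslz$. After performing that conjugation I may assume $\Gamma$ is directly commensurable with $\pslz$, so that $\Gamma$ lies in the commensurator $\mathrm{Comm}_{\pslr}(\pslz) = \mathrm{PGL}_2(\mathbb{Q})^+$; every element is then represented by a rational matrix of positive determinant.

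Next I would exploit the topology. Since $M = \mathbb{H}^2/\Gamma$ is a torsion-free genus-zero surface with cusps, $\pi_1(M)$ is free on the loops encircling the punctures, each of which is parabolic; thus $\Gamma$ is generated by parabolic elements. The key elementary observation is that a parabolic lying in $\mathrm{PGL}_2(\mathbb{Q})^+$ automatically has rational entries once normalized to determinant one: writing such an element as a primitive integral matrix $M$, the parabolic condition $\tr(M)^2 = 4\det(M)$ forces $\det(M)$ to be a perfect square, so dividing by its square root returns a rational determinant-one matrix. Hence every generator, and therefore every element, of $\Gamma$ lies in $\mathrm{PSL}_2(\mathbb{Q})$ and has rational trace.

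The heart of the matter is then an integrality argument, and this is the step I expect to be most delicate. Arithmeticity guarantees that the traces of elements of $\Gamma$ are algebraic integers, so combined with the previous paragraph every $\tr(\gamma)$ lies in $\mathbb{Z}$. Because $\Gamma$ is finitely generated and non-elementary with integral traces, the $\mathbb{Z}$-subalgebra $\mathbb{Z}[\Gamma] \subseteq M_2(\mathbb{Q})$ is an order in $M_2(\mathbb{Q})$; it is contained in a maximal order, and since $\mathbb{Z}$ is a principal ideal domain every maximal order of $M_2(\mathbb{Q})$ is $\mathrm{GL}_2(\mathbb{Q})$-conjugate to $M_2(\mathbb{Z})$. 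Conjugating by the appropriate rational element (which lies in $\pslr$) therefore carries $\Gamma$ into $\mathrm{SL}_2(\mathbb{Z})$, i.e. into $\pslz$, as desired. I would stress that both hypotheses are essential and mark exactly where commensurability alone fails to suffice: the Atkin--Lehner normalizers $\Gamma_0(N)^+$ are commensurable with $\pslz$ but not contained in any conjugate of it, their obstructing elements being the Fricke involutions (which are elliptic, hence excluded by torsion-freeness) and the products $\gamma w_N$ (which have irrational trace, hence excluded once parabolic generation forces rational entries). Verifying that $\mathbb{Z}[\Gamma]$ is a genuine order -- equivalently, that integrality of traces bounds all denominators simultaneously -- is the main technical point to nail down.
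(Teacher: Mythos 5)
Your argument is correct and follows essentially the same route as the paper: Takeuchi's classification to get commensurability with $\pslz$, parabolic generation (from genus zero and torsion-freeness) to force rational traces, arithmeticity to upgrade these to integral traces, and then the standard fact that the resulting order in $M_2(\mathbb{Q})$ conjugates into the maximal order $M_2(\mathbb{Z})$. The only divergence is cosmetic: where the paper cites Neumann--Reid to identify the trace field with the invariant trace field for parabolically generated groups, you instead normalize each parabolic generator of $\mathrm{PGL}_2(\mathbb{Q})^+$ directly (using $\tr^2 = 4\det$ to make the determinant a perfect square), which is a slightly more elementary and self-contained way to land in $\mathrm{PSL}_2(\mathbb{Q})$.
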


\begin{proof}Let $\Gamma$ be such a group. By Takeuchi \cite{Takeuchi}, $\Gamma$ must be commensurable with $\pslz$. Hence its invariant trace field $k(\Gamma) = \mathbb{Q}((\mathrm{tr}\, \gamma)^2 : \gamma \in \Gamma)$ is $\mathbb{Q}$. Since $\Gamma$ is generated by parabolic elements, by Corollary 2.3 of Neumann--Reid \cite{NeumannReid}, the trace field $\mathbb{Q}(\mathrm{tr}\, \gamma : \gamma \in \Gamma)$ is equal to $k(\Gamma)$, and hence is also $\mathbb{Q}$. Since $\Gamma$ is assumed to be arithmetic, the traces of elements of $\Gamma$ are algebraic integers belonging to the trace field $\mathbb{Q}$, and so all traces of elements of $\Gamma$ belong to the rational integers $\mathbb{Z}$. By section 3.2 of Maclachlan--Reid \cite{ReidBook}, $\mathcal{O}(\Gamma)$ is an order in the quaternion algebra $A_0(\Gamma)$. But since $\Gamma$ is commensurable with $\pslz$, $A_0(\Gamma) = \mathrm{M}_2(\mathbb{Q})$ and so $\Gamma$ may be conjugated into $\mathrm{M}_2(\mathbb{Z})$. Thus $\Gamma$ can be seen as being contained in $\pslz$ as required. \end{proof}

%
\noindent {\bf The modular tessellation.} Recall that there is a tessellation (sometimes referred to as the Farey tessellation) which has one vertex for each element of $\mathbb{Q} \cup \{ \infty \}$, and an edge between $p/q$ and $r/s$ if and only if $ps-rq = \pm 1$, where we take fractions in lowest terms, and write $\infty = 1/0$. The sets of vertices, edges, and faces respectively are preserved, and acted upon, by the modular group $\pslz$. 

Each non-trivial parabolic element of $\pslz$ fixes one vertex of this tessellation and no edges or triangles. Only elliptic elements send edges or triangles to themselves. We may consider the quotient of the Farey tessellation by $\Gamma$; since $\Gamma$ is torsion-free, this will be a triangulation of the surface $M=\mathbb{H}^2 / \Gamma$ by ideal triangles. Moreover, since $\Gamma$ is cofinite, the quotient it will have finitely many edges and triangles, and hence finitely many (ideal) vertices. If we include the points $\mathbb{Q} \cup \{ \infty \}$ of the boundary of $\mathbb{H}^2$ then the quotient will be a finite graph which gives a triangulation of the sphere. By stereographic projection from any point in the interior of a triangle, this is equivalent to a planar graph where each face, including the exterior, is a triangle. We summarize this discussion in the below result.\\

\begin{cor}If $\Gamma$ is a non-cocompact, torsion-free, arithmetic Fuchsian group of genus zero, then the quotient surface $\mathbb{H}^2/\Gamma$ is triangulated by the quotient by $\Gamma$ of the modular tessellation.  \end{cor}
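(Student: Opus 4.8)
The plan is to string together Lemma 2.1 with the three structural facts about the $\pslz$-action on the Farey tessellation recalled just above the statement. First I would apply Lemma 2.1 to assume, after conjugating by an element of $\pslr$, that $\Gamma \subseteq \pslz$. Since the full modular group preserves the vertex set, edge set, and face set of the Farey tessellation, the subgroup $\Gamma$ does as well; hence the tessellation is $\Gamma$-invariant and descends to a decomposition of $M = \mathbb{H}^2/\Gamma$ into images of the ideal triangles of the tessellation.

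The crux is then to check that this descent is an honest triangulation rather than a cell complex with degenerate self-gluings, and this is the step I expect to require the most care. Here I would use torsion-freeness together with the recalled fact that \emph{only} elliptic elements of $\pslz$ carry an edge or a triangle of the tessellation to itself. Since $\Gamma$ has no elliptic elements, no non-trivial element of $\Gamma$ stabilizes any edge or face; equivalently, $\Gamma$ acts freely on the set of open edges and the set of open triangles. Combined with the fact that a Fuchsian group acts properly discontinuously on $\mathbb{H}^2$, this guarantees that each triangle of the tessellation maps injectively into $M$ and that no two sides of a single triangle are identified (no folding), so the quotient is a genuine ideal triangulation. The delicate point to articulate cleanly is precisely that ruling out an edge or triangle being fixed \emph{setwise} is what rules out the folded identifications, which is exactly what the "only elliptic elements fix edges or triangles" statement provides.

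Finally I would invoke cofiniteness and the genus-zero hypothesis to pin down the global shape. Each ideal triangle has hyperbolic area $\pi$, so a finite-area quotient contains only finitely many triangle images; the induced triangulation therefore has finitely many faces, edges, and ideal vertices, the last being the $\Gamma$-orbits of $\mathbb{Q} \cup \{\infty\}$, i.e. the cusps of $M$. Since $M$ has genus zero, adjoining these vertices realizes $M$ as a triangulation of the sphere, and stereographic projection from an interior point of any face presents this as a planar triangulation. This topological identification is immediate once the previous paragraph has established that the quotient of the modular tessellation is a bona fide triangulation, so it requires no additional work beyond the free-action argument.
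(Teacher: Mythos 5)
Your overall route is the same as the paper's (Lemma 2.1, then quotient the Farey tessellation, using torsion-freeness to control the gluings and cofiniteness plus genus zero to get a finite spherical, hence planar, triangulation), but the middle step claims more than is true, and the inference used to get it does not hold. Freeness of the $\Gamma$-action on edges and triangles (no nontrivial element stabilizes an edge or a face setwise) does \emph{not} imply that ``each triangle maps injectively into $M$'' or that ``no two sides of a single triangle are identified.'' For example, $T = \begin{pmatrix} 1 & 1 \\ 0 & 1 \end{pmatrix}$ sends the edge $(0,\infty)$ to the edge $(1,\infty)$, thereby identifying two distinct sides of the single ideal triangle $(0,1,\infty)$, while stabilizing no edge and no triangle; and groups $\Gamma$ containing $T$ (or a conjugate) genuinely occur in this setting --- this is exactly the paper's Example 4, where a modular triangle has two of its sides glued to each other, producing a degree~1 vertex. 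Likewise duplicate edges (Example 3) arise, so the quotient can fail to be a simplicial complex. What torsion-freeness actually rules out is an edge being folded onto itself or a triangle being mapped to itself (such an element would be elliptic), which is all the paper uses.

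The fix is to weaken your goal rather than your hypothesis: the corollary is asserting a triangulation in the paper's permissive sense (loops, duplicate edges, and self-glued triangle sides are explicitly allowed in Section 3), so the ``crux'' you identify only requires that no nontrivial element of $\Gamma$ stabilizes a vertex-edge-triangle incidence, i.e.\ that the open triangles and open edges inject and no edge is reversed --- precisely what your free-action observation gives. With that adjustment your argument reduces to the paper's discussion preceding the corollary, and the final paragraph (cofiniteness giving finiteness, genus zero giving a sphere, stereographic projection giving planarity) agrees with the paper.
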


\noindent {\bf Principal congruence subgroups.} Given $N \in \mathbb{N}$, the \emph{principal congruence subgroup} of $\pslz$ of level $N$ is defined to be the kernel of the natural reduction homomorphism
\[ \varphi_N: \pslz \to \mathrm{PSL}_2(\mathbb{Z} / N\mathbb{Z}) \]
defined by reducing the entries modulo $N$. This group is denoted $\Gamma(N)$.

\section{Planar triangulations}

In this section, we show that conversely to the previous section, each triangulation of the plane corresponds to a conjugacy class of arithmetic, genus zero surface groups, and we give a procedure by which one may construct the group from a triangulation.

Suppose we have a graph $G$ which gives a planar triangulation. We construct a subgroup of $\pslz$ as follows. First, select a maximal connected spanning tree $\mathcal{T} \subset G$ and a terminal edge of $\mathcal{T}$, that is, an edge $e_{0,\infty}$ of $\mathcal{T}$ with a vertex $v_\infty$ of $\mathcal{T}$-degree 1. Next, label the other vertex of the edge $e_{0,\infty}$ as $v_0$. This edge is incident to two triangles; choose one of these triangles, label it $f_{0,1,\infty}$ and label the third vertex of the triangle as $v_1$. Label the other two edges $e_{1,\infty}$ and $e_{0,1}$ according to the labels of their respective vertices. 

Given this face $f_{0,1,\infty}$ as a starting point, we proceed by defining a process of developing the labeling across an edge of $G \setminus \mathcal{T}$. Either or both of the edges $e_{0,1}$ and $e_{1,\infty}$ do not belong to $\mathcal{T}$; if both, choose one which does not belong to $\mathcal{T}$. Label the face adjacent to $f_{0,1,\infty}$ across this edge either $f_{1,2,\infty}$ or $f_{0,1/2,1}$ depending on which face is adjacent across the corresponding edge in the modular tessellation. Label the third vertex of that face as $v_2$ or $v_{1/2}$ and label the two edges according to their vertices. Proceed in the same manner, at each step selecting a labeled edge of $G \setminus \mathcal{T}$ adjacent to an unlabeled face. Each time a face is labeled, label the edges and vertex according to the modular tessellation, and add duplicate labels if they already have labels. Continue until every face has a label, when each vertex will have the same number of labels as its $\mathcal{T}$-degree, and each edge of $\mathcal{T}$ has two labels. 

Once this process is complete, pass to the upper half-plane, and construct a polygon $P$ which consists of all the triangles of the Farey tessellation which correspond to the labels on the faces. The boundary of the union of these triangles consists of the edges of $\mathcal{T}$. For each such edge, the two edges corresponding to its labels will be identified by side-pairings of $P$. For example, the edge $e_{0,\infty}$ will also receive a label $e_{d,\infty}$ where $d$ is the $G$-degree of $v_\infty$, and the two edges $e_{0,\infty}$ and $e_{d,\infty}$ of $P$ will be identified by $T^d$. An edge of $\mathcal{T}$ which is not terminal and has labels of $v_{p_1/q_1}$ and $v_{r_1/s_1}$ on one side, and $v_{p_2/q_2}$ and $v_{r_2/s_2}$ on the other, where the letters match on vertices, gives rise to the element of $\pslz$ which sends $p_1/q_1$ to $p_2/q_2$ and $r_1/s_1$ to $r_2/s_2$.

For each vertex $v_{p/q}$ of $G$, it will be a consequence of this construction that our group $\Gamma$ contains the element which is the conjugate of $T^d$, where $d$ is the degree of $v_{p/q}$, which fixes $p/q$. \\

\begin{lemma}\label{lem:fixed}For each vertex label $v_{p/q}$ in the above construction, if $d$ denotes the degree of the vertex of $G$ at that label, the group $\Gamma$ contains the $\pslz$-conjugate of $T^d$ which fixes $p/q$.\end{lemma}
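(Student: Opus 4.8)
The plan is to identify the claimed element as the holonomy around the cusp corresponding to the label $v_{p/q}$, and to compute it as an explicit product of the side-pairings of $P$ taken once around the vertex cycle of that cusp. The single fact about $\pslz$ that drives everything is that $T$ advances the fan of Farey triangles incident to $\infty$ by one triangle; consequently, if $M \in \pslz$ is any element with $M(\infty) = p/q$, then $MT^dM^{-1}$ is the unique element of $\pslz$ that fixes $p/q$ and cyclically advances the fan of Farey triangles incident to $p/q$ by $d$ triangles. So the lemma reduces to showing that the side-pairings encountered on going once around $v_{p/q}$ compose to an element of $\Gamma$ that fixes $p/q$ and advances this fan by exactly $d$.

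First I would record the combinatorics around the vertex. Write $t$ for the $\mathcal{T}$-degree of the vertex of $G$ underlying the label $v_{p/q}$; by the construction this is the number of labels that vertex receives, while its $G$-degree is $d$. The $d$ faces incident to the vertex are cyclically ordered, and the $t$ incident edges of $\mathcal{T}$ cut this cyclic list into $t$ arcs, or sub-fans $F_1, \dots, F_t$ of sizes $d_1, \dots, d_t$ with $d_1 + \cdots + d_t = d$. Consecutive faces inside a single sub-fan are glued across edges of $G \setminus \mathcal{T}$, so by the labeling rule they develop into genuinely adjacent Farey triangles; thus each $F_j$ develops to $d_j$ consecutive Farey triangles about the copy $v_{p_j/q_j}$, and the two edges bounding $F_j$ are the two $\mathcal{T}$-edge sides of $P$ on which side-pairings act.

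Next I would chain the side-pairings. Each side-pairing lies in $\pslz$, preserves the Farey tessellation, and matches vertex labels, so the side-pairing on a bounding edge of $F_j$ carries the copy of $v$ on that edge to the copy of $v$ on the partner edge, i.e.\ it transports $F_j$ to sit Farey-adjacent to the next sub-fan. Composing these pairings along the vertex cycle therefore transports $F_1, F_2, \dots, F_t$ into one developed fan about the base copy $w = p/q$: because the maps preserve the tessellation and glue the outgoing edge of one sub-fan to the incoming edge of the next, the sub-fans concatenate with neither gap nor overlap, advancing by $d_1 + \cdots + d_t = d$ triangles in all. When the cycle closes we are back at $w$ having advanced by $d$, so the resulting product $h \in \Gamma$ fixes $p/q$ and advances the Farey fan there by $d$; hence $h = MT^dM^{-1}$, as claimed. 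The terminal vertex $v_\infty$ is the special case $t = 1$, where $h$ is literally the terminal side-pairing $T^d$ written down in the construction.

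The step I expect to be the main obstacle is the coherence of this concatenation: I must verify that, after transporting by the accumulated holonomy, each successive sub-fan attaches exactly where the previous one ended, with matching orientation, so that the triangle count is clean and the cycle closes to a parabolic fixing $p/q$ rather than to an elliptic or hyperbolic element. This is the local form of the cusp (vertex-cycle) condition in Poincar\'e's polygon theorem, and it must be extracted from the rule that faces and edges are labeled ``according to the modular tessellation,'' which is precisely what guarantees that the developed image of each sub-fan agrees with the honest Farey fan at its copy. Pinning down the orientation conventions also fixes the sign, that is, whether one obtains the conjugate of $T^d$ or of $T^{-d}$; since these generate the same cyclic parabolic subgroup, either resolves the statement.
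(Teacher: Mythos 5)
Your proposal is correct and follows essentially the same route as the paper: compose the side-pairing elements encountered around the vertex cycle, observe that the composition fixes $p/q$ (hence, after conjugating the vertex to $\infty$, is a power of $T$), and pin down the power as exactly $d$ by counting the triangles of the developed fan. Your sub-fans $F_1,\dots,F_t$ of sizes summing to $d$ are precisely the paper's observation that the partial products $\gamma_1^{-1}, \gamma_1^{-1}\gamma_2^{-1},\dots$ transport every incident triangle into a single sequence of $d$ triangles at $\infty$, and your explicit flagging of the concatenation/coherence step is if anything slightly more careful than the paper's treatment of that point.
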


\begin{proof} If the vertex in question has $\mathcal{T}$-degree 1, then the result follows from the fact that the corresponding element of $\Gamma$ fixes $p/q$ and translates its neighbors $d$ triangles over. Thus, suppose that the vertex does not have $\mathcal{T}$-degree 1, and let $m$ denote this $\mathcal{T}$-degree. By conjugation, we may move the vertex to $\infty$. Then there are $m$ elements $\gamma_1, \ldots, \gamma_m \in \Gamma$, corresponding to the $m$ edges incident to the vertex, so that the labels at the vertex are $\infty$, $\gamma_1(\infty), \ldots, \gamma_{m-1}(\infty)$, and such that $\gamma_m \circ \gamma_{m-1} \circ \ldots \circ \gamma_2 \circ \gamma_1(\infty) = \infty$. Let $\gamma = \gamma_m \circ \gamma_{m-1} \circ \ldots \circ \gamma_2 \circ \gamma_1$. Then $\gamma \in \Gamma$ and hence is an element of $\pslz$; further, $\gamma$ fixes $\infty$ and thus must be a parabolic element. It follows that $\gamma$ is a power of $T$, and it remains to show that $\gamma = T^d$. To see this, observe that for each triangle incident to the vertex, one of the elements $\gamma_1^{-1}, \gamma_1^{-1} \circ \gamma_2^{-1}, \ldots, \gamma_1^{-1} \circ \gamma_2^{-1} \circ \ldots \circ \gamma_{m-1}^{-1}$ will send that triangle to one incident to the vertex at $\infty$; once we do this, we see $d$ triangles arranged in sequence, and the extreme edges of these are equivalent under the action of $\Gamma$. Therefore we see that $T^d$ identifies these faces, and note that this is true for no natural number less than $d$. After undoing the conjugation that moved our vertex to $\infty$, we see that $\Gamma$ contains the appropriate $\pslz$-conjugate of $T^d$.  \end{proof}

\noindent {\bf Example 1.} Let $G$ be a tetrahedron, as shown in Figure \ref{fig:tetra} with a spanning tree $\mathcal{T}$.

\begin{figure}[htb]
\begin{center}
\begin{tikzpicture}
\filldraw (0,0) circle (3pt) -- (0,2) circle (3pt) -- (1.7,-1) circle (3pt) -- (-1.7,-1) circle (3pt);
\filldraw[very thick] (-1.7,-1) -- (0,2);
\filldraw[very thick] (-1.7,-1) -- (0,0);
\filldraw[very thick] (-1.7,-1) -- (1.7,-1);
\filldraw (1.7,-1) -- (0,0);
\end{tikzpicture}
\caption{The thicker lines are the spanning tree $\mathcal{T}$}
\label{fig:tetra}
\end{center}
\end{figure}
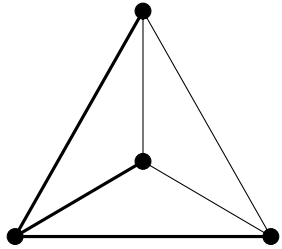

\noindent  Select the central vertex as $v_{\infty}$ and its neighbor in $\mathcal{T}$ as $v_0$. Select the lower of the three triangles as $f_{0,1,\infty}$. This makes the lower right vertex $v_1$ and the two other edges of that triangle $e_{0,1}$ and $e_{1,\infty}$. At this point, we have only one available edge of $G \setminus \mathcal{T}$, it is the edge $e_{1,\infty}$. We label the top right triangle $f_{1,2,\infty}$, the top vertex $v_2$, and the edges $e_{1,2}$ and $e_{2,\infty}$. If we now choose $e_{2,\infty}$ then the left triangle is $f_{2,3,\infty}$, and then if we develop across $e_{1,2}$ then the back face of the tetrahedron is $f_{1,3/2,2}$. The complete labeled graph is shown in Figure \ref{fig:labeled}.

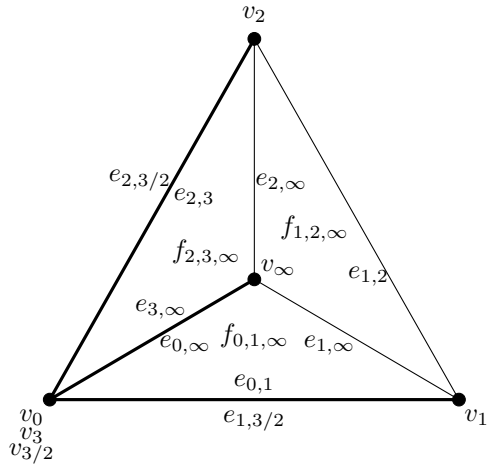
\begin{figure}[htb]
\begin{center}
\begin{tikzpicture}[scale=0.8]
\filldraw (0,0) circle (3pt) -- (0,4) circle (3pt) -- (3.4,-2) circle (3pt) -- (-3.4,-2) circle (3pt);
\filldraw[very thick] (-3.4,-2) -- (0,4);
\filldraw[very thick] (-3.4,-2) -- (0,0);
\filldraw[very thick] (-3.4,-2) -- (3.4,-2);
\filldraw (3.4,-2) -- (0,0);
\draw (0.4,0.2) node {$v_\infty$};
\draw (-3.7,-2.3) node {$v_0$};
\draw (-3.7,-2.6) node {$v_3$};
\draw (-3.7,-2.9) node {$v_{3/2}$};
\draw (3.7,-2.3) node {$v_1$};
\draw (0,4.4) node {$v_2$};
\draw (-1.15,-1.1) node {$e_{0,\infty}$};
\draw (0,-1.75) node {$e_{0,1}$};
\draw (0,-2.35) node {$e_{1,3/2}$};
\draw (1.25,-1.1) node {$e_{1,\infty}$};
\draw (0.45,1.6) node {$e_{2,\infty}$};
\draw (1.9,0.05) node {$e_{1,2}$};
\draw (-1.55,-0.5) node {$e_{3,\infty}$};
\draw (0,-0.95) node {$f_{0,1,\infty}$};
\draw (1,0.85) node {$f_{1,2,\infty}$};
\draw (-0.8,0.45) node {$f_{2,3,\infty}$};
\draw (-1,1.35) node {$e_{2,3}$};
\draw (-1.9,1.65) node {$e_{2,3/2}$};
\end{tikzpicture}
\caption{The graph after labeling is complete. The back face is $f_{1,3/2,2}$}
\label{fig:labeled}
\end{center}
\end{figure}

When we draw the corresponding triangles in the upper half-plane we see Figure \ref{fig:farey}.

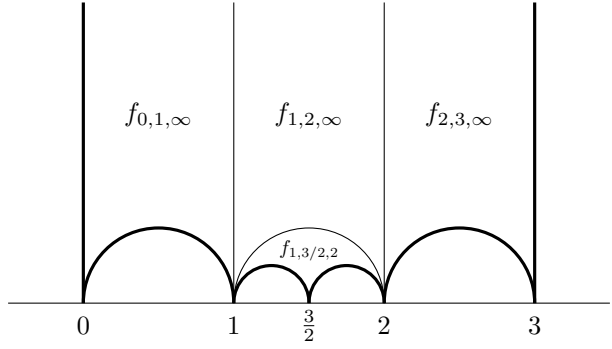
\begin{figure}[htb]
\begin{center}
\begin{tikzpicture}
\filldraw (-1,0) -- (7,0);
\filldraw (2,0) -- (2,4);
\filldraw (4,0) -- (4,4);
\draw (4,0) arc (0:180:1);

\filldraw[very thick] (0,0) -- (0,4);
\filldraw[very thick] (6,0) -- (6,4);
\draw[very thick] (2,0) arc (0:180:1);
\draw[very thick] (6,0) arc (0:180:1);
\draw[very thick] (3,0) arc (0:180:0.5);
\draw[very thick] (4,0) arc (0:180:0.5);

\draw (0,-0.3) node {$0$};
\draw (2,-0.3) node {$1$};
\draw (4,-0.3) node {$2$};
\draw (6,-0.3) node {$3$};
\draw (3,-0.3) node {$\frac{3}{2}$};
\draw (1,2.5) node {$f_{0,1,\infty}$};
\draw (3,2.5) node {$f_{1,2,\infty}$};
\draw (5,2.5) node {$f_{2,3,\infty}$};
\draw (3,0.7) node[scale=0.75] {$f_{1,3/2,2}$};

\end{tikzpicture}
\caption{The thicker lines from $\mathcal{T}$ become the boundary of the fundamental domain}
\label{fig:farey}
\end{center}
\end{figure}

We generate the group $\Gamma$ by taking $T^3$, since $v_\infty$ has degree 3. We take the $\pslz$-conjugates of $T^3$ which fix 1 and 2 respectively; note that these pair the sides $e_{0,1}$ with $e_{1,3/2}$ and $e_{3/2,2}$ with $e_{2,3}$ respectively as each moves the tiling 3 triangles. These suffice to generate $\Gamma$ since they pair all the sides of $P$. We note that we could include the conjugates of $T^3$ fixing $0$, $3/2$, and $3$, but these are conjugate to one another via the other generators, and also they can be written as products of the other generators.\\

\noindent {\bf Example 2.} The triangulation shown in Figure \ref{fig:casen10} has 10 vertices; the labeling described above is given for the vertices only as the edge and face information may be inferred from this. We may read off the vertices of the fundamental polygon that arises from this diagram by starting at $\infty$ and moving along $\mathcal{T}$; it has vertices at $\infty, 0, 1/2, 1, 3/2, 2, 7/3, 5/2, 3, 10/3, 7/2, 18/5, 29/8, 11/3, 4, 9/2, 14/3, 5, \infty$.

\begin{figure}[htb]
\begin{center}
\begin{tikzpicture}[scale=0.9]
\draw (0,-2) -- (0,2) -- (2,0) -- (-2,0) -- (0,-2);
\draw (-2,0) -- (0,2);
\draw (0,-2) -- (2,0);
\draw (3,3) -- (3,-3) -- (-3,-3) -- (-3,3) -- (3,3);
\draw (3,3) -- (2,0) -- (3,-3) -- (0,-2) -- (-3,-3) -- (-2,0) -- (-3,3) -- (0,2) -- (3,3);
\draw (-3,3) -- (0,6) -- (3,3);

\draw (0,6) .. controls (-8,3) and (-4,-2) .. (-3,-3);
\draw (0,6) .. controls (8,3) and (4,-2) .. (3,-3);

\draw[ultra thick] (0,0) -- (2,0) -- (0,2);
\draw[ultra thick] (2,0) -- (0,-2) -- (-2,0);
\draw[ultra thick] (3,3) -- (2,0) -- (3,-3);
\draw[ultra thick] (-3,-3) -- (3,-3);
\draw[ultra thick] (-3,3) -- (3,3) -- (0,6);

\draw (-2.3,0) node {$v_\infty$};
\draw (-0.2,-1.5) node {$v_0$};
\draw (-0.2,0.2) node {$v_1$};
\draw (0,2.2) node {$v_2$};
\draw (-3.2,3.1) node {$v_3$};
\draw (-3.1,-3.2) node {$v_4$};
\draw (1.4,-0.2) node {$v_{1/2}$};
\draw (1.4,0.2) node {$v_{3/2}$};
\draw (1.7,2.8) node {$v_{5/2}$};
\draw (1.85,0.6) node {$v_{7/3}$};
\draw (0,6.2) node {$v_{7/2}$};
\draw (-0.35,-1.9) node {$v_{5}$};
\draw (2.5,-2.6) node {$v_{9/2}$};
\draw (1.8,-0.8) node {$v_{14/3}$};
\draw (2.2,3.2) node {$v_{10/3}$};
\draw (3.2,-3.25) node {$v_{11/3}$};
\draw (3.35,3.2) node {$v_{18/5}$};
\draw (2.45,0) node {$v_{29/8}$};

\end{tikzpicture}
\caption{One triangulation for $n=10$ with a spanning tree $\mathcal{T}$ in bolded lines}
\label{fig:casen10}
\end{center}
\end{figure}
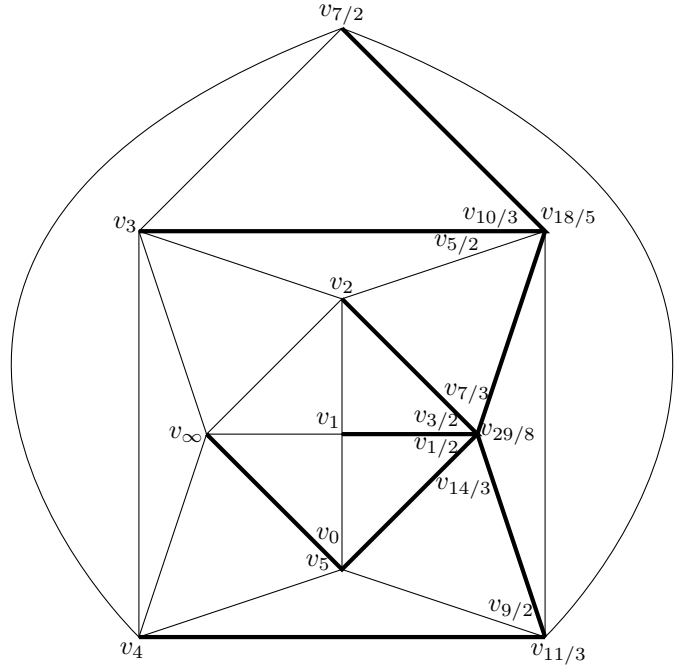

The generators of the group $\Gamma$ are then: $T^5$; the $\pslz$-conjugates of $T^5$ which fix $2, 3$, and $4$; the $\pslz$-conjugates of $T^4$ which fix $1$ and $7/2$; and the elements
\[ \begin{pmatrix} -24 & 5 \\ -5 & 1\end{pmatrix}, \begin{pmatrix} -114 & 415 \\ -25 & 91\end{pmatrix}, \mbox{ and } \begin{pmatrix} -112 & 271 \\ -31 & 75\end{pmatrix} \]
which pair, respectively, $e_{0,1/2}$ with $e_{5,14/3}$, $e_{29/8,11/3}$ with $e_{14/3,9/2}$, and $e_{7/3,5/2}$ with $e_{18/5,29/8}$.\\

\noindent {\bf Definition.} We denote by $\delta(G)$ the minimum degree of a vertex in the graph $G$. \\

Observe that in Example 1, $\delta(G) = 3$, and in Example 2, $\delta(G)=4$. We will for the most part consider triangulations with $\delta(G) \geq 3$, but we will also need to consider graphs with vertices of degree 1 or 2.\\

\noindent {\bf Example 3.} In Figure \ref{fig:deg2ex} we have a triangulation with five vertices, including one of degree 2. We see two edges which have the same pair of vertices; this is permitted in our discussion and both edges count towards the degrees of the respective vertices.\\

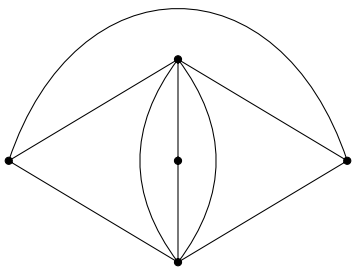
\begin{figure}[htb]
\begin{center}
\begin{tikzpicture}[scale=0.45]
\filldraw (0,-3) circle (3pt) -- (0,0) circle (3pt)-- (0,3) circle (3pt);
\filldraw (5,0) circle (3pt) -- (0,-3);
\filldraw (-5,0) circle (3pt) -- (0,-3);
\draw (5,0) -- (0,3) -- (-5,0);

\draw (0,-3) .. controls (1.5,-1) and (1.5,1) .. (0,3);
\draw (0,-3) .. controls (-1.5,-1) and (-1.5,1) .. (0,3);

\draw (-5,0) .. controls (-3,6) and (3,6) .. (5,0);

\end{tikzpicture}
\caption{A triangulation with six triangles which includes a vertex of degree 2}
\label{fig:deg2ex}
\end{center}
\end{figure}

\noindent {\bf Example 4.} In Figure \ref{fig:deg1ex} we have a triangulation with four vertices, one of which has degree 1. This corresponds to a genus zero subgroup of $\pslz$ which contains $T$ (or some $\pslz$-conjugate thereof) and where therefore two of the sides of a modular triangle are identified, creating the central circular part of the figure. Note that this creates an edge which does not have two distinct vertices, but the same vertex twice. Both occurrences will be counted in the degree of this vertex; for example, Figure \ref{fig:deg1ex} has a vertex of degree 6.\\

\begin{figure}[htb]
\begin{center}
\begin{tikzpicture}[scale=0.5]
\draw (0,0) circle (2);

\filldraw (2,0) circle (3pt);
\filldraw (-4,0) circle (3pt);
\filldraw (0,0) circle (3pt) -- (2,0);
\filldraw (4,0) circle (3pt) -- (2,0);

\draw (-4,0) .. controls (-3.7,5) and (1.7,5) .. (2,0);
\draw (-4,0) .. controls (-3.7,-5) and (1.7,-5) .. (2,0);
\draw (-4,0) .. controls (-3.5,6) and (3.5,6) .. (4,0);

\end{tikzpicture}
\caption{A triangulation with four triangles which includes a vertex of degree 1}
\label{fig:deg1ex}
\end{center}
\end{figure}
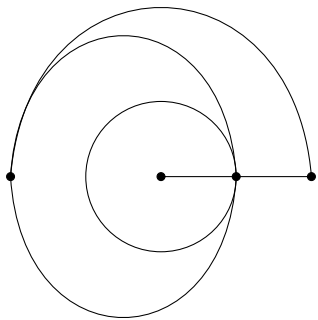

\noindent {\bf Definitions.} In the remainder of this paper, a triangulation $G$ will be called \emph{regular} if $\delta(G) \geq 3$, each edge has two distinct endpoints, and no two edges have the same two endpoints. An edge which does not have two distinct endpoints will be called a \emph{loop}, and distinct edges with the same endpoints will be called \emph{duplicate edges}. If a vertex $v$ has degree 1, we will refer to the sole triangle $\mathscr{T}$ incident to $v$ as \emph{containing} $v$, and the unique triangle sharing an edge with $\mathscr{T}$ as the triangle \emph{enclosing} $\mathscr{T}$.\\

Note that Example 3 (see Figure \ref{fig:deg2ex}) has duplicate edges, and Example 4 (see Figure \ref{fig:deg1ex}) has both a loop and a pair of duplicate edges.

\section{Systoles from triangulations}

In this section, we describe how one can take a spherical triangulation and deduce bounds for the systole, assuming that the triangulation corresponds to a subset of the modular triangulation as described above.

We will make use of the following Lemma in this section and in later sections.

\begin{lemma}\label{tracemn}If $\Gamma$ contains parabolic elements 
\[ \begin{pmatrix} 1 & m_1 \\ 0 & 1 \end{pmatrix} \ \ \mbox{    and     } \ \ \begin{pmatrix} a & b \\ m_2 & d \end{pmatrix}\]
where $a+d=2$, then $\Gamma$ contains an element $\gamma$ with $|\tr{(\gamma)}| = |m_1 m_2 -2|$.  \end{lemma}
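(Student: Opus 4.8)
The plan is to exhibit the required element $\gamma$ explicitly as a short word in the two given parabolics and then read off its trace by a single matrix multiplication; no geometry or group theory beyond closure under products is needed. Write $P_1 = \begin{pmatrix} 1 & m_1 \\ 0 & 1 \end{pmatrix}$ and $P_2 = \begin{pmatrix} a & b \\ m_2 & d \end{pmatrix}$ for the two hypothesized elements. Since $P_2 \in \mathrm{PSL}_2(\mathbb{R})$ we have $ad - bm_2 = 1$, and the hypothesis $a+d = 2$ records that $P_2$ is parabolic; but I would note immediately that the determinant condition will only be used to confirm $P_2$ is a genuine group element, and plays no role in the trace computation, which uses solely $a+d=2$.

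The one genuine choice is which word to form, since the naive product has the wrong sign: computing $P_1 P_2 = \begin{pmatrix} a + m_1 m_2 & b + m_1 d \\ m_2 & d \end{pmatrix}$ gives $\tr(P_1P_2) = (a+d) + m_1m_2 = 2 + m_1 m_2$. To land on $m_1 m_2 - 2$ instead, I would take $\gamma = P_1^{-1} P_2$, which lies in $\Gamma$ because $\Gamma$ is a group. A direct multiplication gives $P_1^{-1}P_2 = \begin{pmatrix} a - m_1 m_2 & b - m_1 d \\ m_2 & d \end{pmatrix}$, whence $\tr(\gamma) = (a+d) - m_1 m_2 = 2 - m_1 m_2$, and therefore $|\tr(\gamma)| = |m_1 m_2 - 2|$, exactly as claimed. (The conjugate word $P_2 P_1^{-1}$ has the same trace, so either serves.)

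There is essentially no obstacle here; the only point requiring care is bookkeeping about signs. Because traces in $\mathrm{PSL}_2(\mathbb{R})$ are well defined only up to an overall sign, the statement must be phrased with $|\tr(\cdot)|$, and so the sign ambiguity that distinguishes $P_1 P_2$ from $P_1^{-1} P_2$ is harmless at the level of $|\tr|$ once one recognizes that $|2 - m_1 m_2| = |m_1 m_2 - 2|$. Thus the entire content of the lemma is the elementary observation that conjugating one parabolic's translation length against the off-diagonal entry of the other produces an element whose trace is a prescribed quadratic expression in $m_1, m_2$.
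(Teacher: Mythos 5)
Your proof is correct and is essentially the same as the paper's: the paper multiplies $P_1 P_2^{-1}$ and reads off the trace $2-m_1m_2$, while you use $P_1^{-1}P_2$, whose trace is identical (in $\mathrm{SL}_2$ a matrix and its inverse have the same trace, and these two words are inverses up to cyclic permutation). The direct one-line matrix computation and the role of $a+d=2$ are the same in both arguments.
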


\begin{proof} We have
\[ \begin{pmatrix}1 & m_1 \\ 0 & 1 \end{pmatrix}   \begin{pmatrix}a & b \\ m_2 & d \end{pmatrix}^{-1} =  \begin{pmatrix}1 & m_1 \\ 0 & 1 \end{pmatrix}   \begin{pmatrix}d & -b \\ -m_2 & a \end{pmatrix} = \begin{pmatrix} d-m_1m_2 & am_2-b \\ -m_1 & a \end{pmatrix}. \]
Since $a+d=2$, the trace of this element is $2-m_1m_2$, giving the stated result.  \end{proof}

%

We may also see the above result in terms of the L and R matrices (see Series \cite{Series}), corresponding to turning left or right through the triangulation of the surface, and the path as going around two faces of the dual trivalent graph to $G$, making either a left or right turn at each vertex (see Brooks--Makover \cite{BrooksMakover}). A loop which goes around two cusps of the triangulation can be seen as one left turn, $m_1-2$ right, one left, and $m_2-2$ right, where $m_1$ and $m_2$ are the degrees of the vertices (see Figure \ref{fig:lsandrs}). With the associations that 
\[ L = \begin{pmatrix}1 & 1 \\ 0 & 1\end{pmatrix} \ \ \ \ \ R = \begin{pmatrix}1 & 0 \\ 1 & 1\end{pmatrix},\]
we find that 
\begin{align*} LR^{m_1-2}LR^{m_2-2} &= \begin{pmatrix}1 & 1 \\ 0 & 1\end{pmatrix}  \begin{pmatrix}1 & 0 \\ m_1-2 & 1\end{pmatrix}  \begin{pmatrix}1 & 1 \\ 0 & 1\end{pmatrix}  \begin{pmatrix}1 & 0 \\ m_2-2 & 1\end{pmatrix} \\
&= \begin{pmatrix}m_1-1 & 1 \\ m_2-2 & 1\end{pmatrix} \begin{pmatrix}m_2-1 & 1 \\ m_2-2 & 1\end{pmatrix} \\
&= \begin{pmatrix} (m_1-1)(m_2-1) +m_2-2 & m_1 \\ (m_1-2)(m_2-1)+m_2-2 & m_1-1     \end{pmatrix} \end{align*}
which has trace $m_1m_2-m_1-m_2+1+m_2-2+m_1-1 = m_1m_2-2$. \\

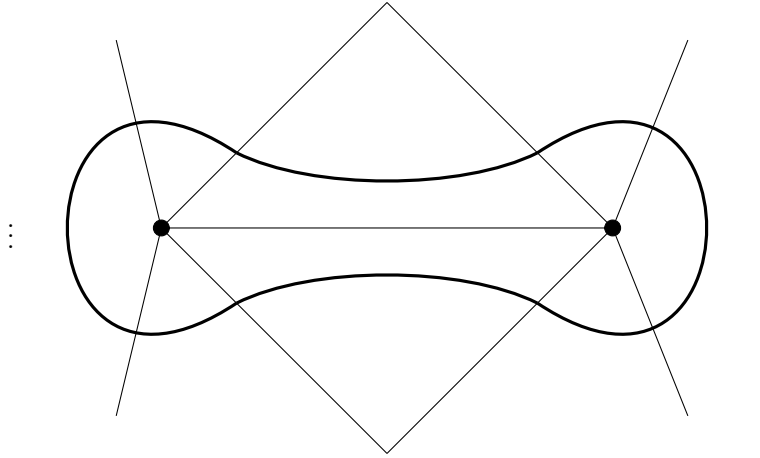
\begin{figure}[htb]
\begin{center}
\begin{tikzpicture}
\filldraw (-3,0) circle (3pt) -- (3,0) circle (3pt);
\draw (-3,0) -- (0,3);
\draw (-3,0) -- (0,-3);
\draw (-3,0) -- (-3.6,2.5);
\draw (-3,0) -- (-3.6,-2.5);
\draw (3,0) -- (0,3);
\draw (3,0) -- (0,-3);
\draw (3,0) -- (4,2.5);
\draw (3,0) -- (4,-2.5);

\draw (-5,0) node {$\vdots$};
\draw (5,0) node {$\vdots$};

\draw[very thick] (-2,-1) .. controls (-5,-3) and (-5,3) .. (-2,1);
\draw[very thick] (2,-1) .. controls (5,-3) and (5,3) .. (2,1);
\draw[very thick] (-2,1) .. controls (-1,0.5) and (1,0.5) .. (2,1);
\draw[very thick] (-2,-1) .. controls (-1,-0.5) and (1,-0.5) .. (2,-1);

\end{tikzpicture}
\caption{If the vertices have degrees $m_1$ and $m_2$ then the loop corresponds to the word $LR^{m_1-2}LR^{m_2-2}$}
\label{fig:lsandrs}
\end{center}
\end{figure}

\noindent {\bf Definition: Density of an edge.} We henceforth define the \emph{density} $D(e)$ of an edge $e$ in a triangulation to be the product of the two degrees of its vertices; that is, if the two vertices have degrees $m_1$ and $m_2$ respectively, then $D(e) = m_1m_2$. \\

In the next section we obtain the bounds on the systole for arithmetic examples through bounding the possible densities of edges in a planar graph. We will make use of the following result, which is a reframing of Lemma \ref{tracemn} with the density in mind.

\begin{lemma}\label{densitylength}An edge of density $D$ in the graph $G$ corresponds to a loop which goes around the corresponding cusps and has length $2\arccosh{\left( (D-2)/2) \right)}$.  \end{lemma}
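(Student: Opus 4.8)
The plan is to combine the two results already established earlier in the excerpt. Lemma \ref{tracemn} tells me that if $\Gamma$ contains a parabolic of the form $\left(\begin{smallmatrix} 1 & m_1 \\ 0 & 1\end{smallmatrix}\right)$ together with a parabolic whose lower-left entry is $m_2$ and whose trace is $2$, then $\Gamma$ contains an element $\gamma$ with $|\tr(\gamma)| = |m_1 m_2 - 2|$. On the other hand, equation (\ref{trlength}) converts a trace into a geodesic length via $\ell = 2\arccosh(|\tr(\gamma)|/2)$. So the entire content of the statement is to show that an edge $e$ of density $D = m_1 m_2$ supplies exactly the hypotheses of Lemma \ref{tracemn}, with the two degrees playing the roles of $m_1$ and $m_2$, and then to substitute.

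First I would fix an edge $e$ whose two endpoints are cusps of degrees $m_1$ and $m_2$. By Lemma \ref{lem:fixed}, the group $\Gamma$ contains, for each vertex, the $\pslz$-conjugate of $T^d$ fixing that vertex, where $d$ is the degree. After conjugating so that one endpoint of $e$ sits at $\infty$, the stabilizer of that cusp contributes precisely the parabolic $\left(\begin{smallmatrix} 1 & m_1 \\ 0 & 1\end{smallmatrix}\right)$ (this is $T^{m_1}$). The other endpoint of $e$ is some rational $p/q$, and the conjugate of $T^{m_2}$ fixing it is a parabolic with trace $2$; choosing the normalization so that this cusp is the one adjacent to $\infty$ across $e$, its lower-left entry is $\pm m_2$, giving the second matrix $\left(\begin{smallmatrix} a & b \\ m_2 & d\end{smallmatrix}\right)$ with $a + d = 2$ demanded by Lemma \ref{tracemn}. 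The picture of Figure \ref{fig:lsandrs}, together with the $LR$-word computation $LR^{m_1-2}LR^{m_2-2}$ whose trace is $m_1 m_2 - 2$, is exactly the geometric incarnation of this: the loop encircling the two cusps turns left once at each cusp and right $m_i - 2$ times around it.

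Having produced $\gamma$ with $|\tr(\gamma)| = |m_1 m_2 - 2| = D - 2$ (the absolute value is harmless since $D = m_1 m_2 \geq 4 > 2$ once $\delta(G) \geq 2$), I would substitute into (\ref{trlength}) to get
\[ \ell = 2\arccosh\!\left( \frac{D-2}{2} \right), \]
which is the claimed formula. The only genuine verification — and the step I expect to be the main obstacle to write down cleanly rather than difficult in principle — is confirming that the hyperbolic element $\gamma$ produced this way really is represented by the loop encircling the two cusps across $e$, i.e. that it is primitive and freely homotopic to the boundary of a neighborhood of the two punctures joined by $e$, rather than to some other curve or a proper power. This is essentially the content already encoded in the $LR$-word reading of Figure \ref{fig:lsandrs}, so I would lean on that interpretation: the word $LR^{m_1-2}LR^{m_2-2}$ traverses the dual graph once around the two faces corresponding to the two cusps, and reading off its trace directly gives $m_1 m_2 - 2$, confirming both the identity of the curve and the value of the trace simultaneously.
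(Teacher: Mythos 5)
Your proposal is correct and follows exactly the route the paper takes: the paper's proof of this lemma is literally the single sentence that it is ``a combination of Lemma \ref{tracemn} and equation (\ref{trlength}),'' with the identification of the encircling loop with the word $LR^{m_1-2}LR^{m_2-2}$ supplied by the discussion and Figure \ref{fig:lsandrs} immediately preceding the definition of density. The extra detail you supply (invoking Lemma \ref{lem:fixed} to produce the two parabolics and checking that the resulting element really is the loop around the two cusps) is a faithful expansion of what the paper leaves implicit, not a different argument.
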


\begin{proof}This is a combination of Lemma \ref{tracemn} and equation (\ref{trlength}).  \end{proof}
\begin{lemma}\label{adjtriangles}If there are two vertices of adjacent triangles whose degrees are 2 and, respectively, 2 or 3, then there is a hyperbolic element in the corresponding group of trace, respectively, 14 or 22.  \end{lemma}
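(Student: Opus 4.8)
The plan is to place the degree-$2$ vertex at $\infty$, read off the two parabolic elements that Lemma \ref{lem:fixed} supplies, and feed them into Lemma \ref{tracemn}. Throughout I interpret the hypothesis as saying that the two distinguished vertices are the apexes of the two adjacent triangles: the degree-$2$ vertex is the vertex of one triangle opposite the shared edge, and the degree-$2$-or-$3$ vertex is the opposite vertex of the other.

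First I would normalize. Since $\pslz$ acts transitively on $\mathbb{Q} \cup \{\infty\}$, I conjugate so that the degree-$2$ vertex lies at $\infty$; by Lemma \ref{lem:fixed}, $\Gamma$ then contains the conjugate of $T^2$ fixing $\infty$, which is $T^2$ itself, and hence $T^{-2} = \bigl(\begin{smallmatrix} 1 & -2 \\ 0 & 1 \end{smallmatrix}\bigr)$. The vertex at $\infty$ is incident to exactly two triangles, which after a further conjugation by a power of $T$ (preserving both $\infty$ and $T^{\pm 2}$) I may take to be $(\infty,0,1)$ and $(\infty,1,2)$. Let $\Delta_1 = (\infty,0,1)$ be the triangle of the adjacent pair whose apex is $\infty$; then its shared edge is the one opposite $\infty$, namely $(0,1)$. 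By the structure of the modular tessellation, the triangle $\Delta_2$ adjacent to $\Delta_1$ across $(0,1)$ is the one built on the mediant, $(0,1,\tfrac12)$, so the second distinguished vertex sits at $\tfrac12$ and has degree $d \in \{2,3\}$. The conceptual heart is that the degree-$2$ condition at $\infty$ forces its fundamental strip to have width $2$, so that $\Delta_1$ has a unit Farey edge as base and the apex of the adjacent triangle has denominator exactly $q=2$.

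Next I would write down the second parabolic. By Lemma \ref{lem:fixed}, $\Gamma$ contains the $\pslz$-conjugate of $T^d$ fixing $\tfrac12$; conjugating $T^d$ by a matrix sending $\infty \mapsto \tfrac12$ gives this element as $\bigl(\begin{smallmatrix} 1-2d & d \\ -4d & 1+2d \end{smallmatrix}\bigr)$, which has trace $2$, diagonal entries summing to $2$, and lower-left entry $-4d = -dq^2$ with $q=2$. Applying Lemma \ref{tracemn} with $m_1 = -2$ (from $T^{-2}$) and $m_2 = -4d$ then yields an element of $\Gamma$ with $|\tr| = |m_1 m_2 - 2| = |8d - 2| = 8d-2$, equal to $14$ when $d=2$ and $22$ when $d=3$. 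Since this exceeds $2$, the element is hyperbolic, as required.

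The step I expect to be the main obstacle is the normalization together with the identification of the second vertex's position: one must argue carefully that the degree-$2$ hypothesis at $\infty$ (equivalently, that $\Delta_1$ has a unit-length base edge) forces the apex of the adjacent triangle to have denominator $q=2$, since it is precisely this value that makes $dq^2$ combine with the translation length $2$ to produce $m_1 m_2 = 16$ (resp. $24$), hence trace $14$ (resp. $22$). Sign bookkeeping --- choosing $T^{-2}$ rather than $T^2$, and the conjugate of $T^d$ rather than of $T^{-d}$ --- is needed to land on exactly these values and not $18$ (resp. $26$); but since $\Gamma$ contains all of these elements and their inverses, this is only a matter of selecting the right representatives.
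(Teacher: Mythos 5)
Your proposal is correct and follows essentially the same route as the paper: both place the degree-$2$ vertex at $\infty$ and the other distinguished vertex at $1/2$, obtain the parabolics $T^{\pm 2}$ and the conjugate $\bigl(\begin{smallmatrix}1-2d & d\\ -4d & 2d+1\end{smallmatrix}\bigr)$ of $T^d$ fixing $1/2$, and feed them into Lemma \ref{tracemn} to get trace $8d-2$, i.e.\ $14$ or $22$. The only differences are cosmetic: the paper arranges the labels via a choice of spanning tree avoiding the shared edge rather than by conjugation, and your explicit sign bookkeeping ($m_1=-2$, $m_2=-4d$) is if anything slightly more careful than the paper's statement that $m_2 = 8$ or $12$.
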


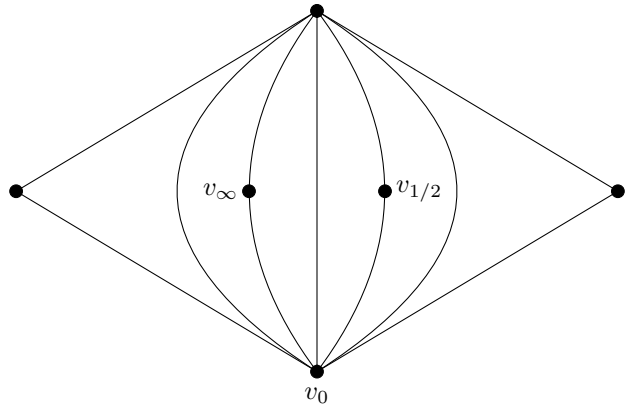
\begin{figure}[htb]
\begin{center}
\begin{tikzpicture}[scale=0.8]
\filldraw (0,-3) circle (3pt) -- (0,3) circle (3pt);
\filldraw (5,0) circle (3pt) -- (0,-3);
\filldraw (-5,0) circle (3pt) -- (0,-3);
\draw (5,0) -- (0,3) -- (-5,0);

\draw (0,-3) .. controls (1.5,-1) and (1.5,1) .. (0,3);
\draw (0,-3) .. controls (-1.5,-1) and (-1.5,1) .. (0,3);
\draw (0,-3) .. controls (3.1,-1) and (3.1,1) .. (0,3);
\draw (0,-3) .. controls (-3.1,-1) and (-3.1,1) .. (0,3);

\filldraw (1.13,0) circle (3pt);
\filldraw (-1.13,0) circle (3pt);

\draw (-1.6,0) node {$v_\infty$};
\draw (0,-3.4) node {$v_0$};
\draw (1.7,0) node {$v_{1/2}$};

\end{tikzpicture}
\caption{The two degree 2 vertices are vertices of adjacent triangles}
\label{fig:degreetwos}
\end{center}
\end{figure}

\begin{proof}Select a spanning tree $\mathcal{T}$ which has a terminal vertex, i.e. one of $\mathcal{T}$-degree 1, at the degree 2 vertex, or one of the two degree 2 vertices. Further, select $\mathcal{T}$ so that it does not contain the edge common to the two adjacent triangles. Assign $v_\infty$ to be the terminal vertex of $\mathcal{T}$ and $v_0$ to be the other vertex of the edge of $\mathcal{T}$. Then the other vertex of degree 2 or 3 will be assigned to be $v_{1/2}$ (see Figure \ref{fig:degreetwos}).

With this setup, the group $\Gamma$ will contain 
\[ T^2 = \begin{pmatrix} 1 & 2 \\ 0 & 1 \end{pmatrix} \]
and the conjugates of $T^2$ or $T^3$ fixing $1/2$; these are
\[ \begin{pmatrix} 1 & 0 \\ 2 & 1\end{pmatrix} \begin{pmatrix}1 & d \\ 0 & 1\end{pmatrix} \begin{pmatrix} 1 & 0 \\ -2 & 1\end{pmatrix} = \begin{pmatrix}1-2d & d \\ -4d & 2d+1\end{pmatrix}    \]   
where $d$ is the degree of $v_{1/2}$. By Lemma \ref{tracemn} with $m_1=2$ and $m_2 = 8$ or $12$ depending on whether $d$ is equal to 2 or 3, we see that $\Gamma$ contains a hyperbolic element of trace 14 or 22.\end{proof}

\begin{lemma}\label{lem:loops}If $G$ contains a loop with both of its vertices being $v$, then either $G$ has a vertex of degree 1 adjacent to $v$, or $\Gamma$ has an element of trace no greater than $\left\lfloor \frac{d}{2} \right\rfloor$ , where $d=\mathrm{deg}(v)$. \end{lemma}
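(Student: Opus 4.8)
The plan is to analyze the local structure around a loop edge by passing to the upper half-plane and placing the vertex $v$ at $\infty$, so that its stabilizer is generated by a $\pslz$-conjugate of $T^d$, which after conjugation we take to be $T^d$ itself. A loop at $v$ means that both endpoints of some edge of $G\setminus\mathcal{T}$ are the same vertex $v$; in the developed picture this says there are two triangles incident to the cusp at $\infty$, separated by some number $k$ of triangle-steps along the horocycle, that are identified by a side-pairing of the fundamental polygon. Concretely, two of the triangles hanging down from $\infty$ share the loop edge, so there is an element $\gamma\in\Gamma$ fixing $\infty$ which carries one such triangle to the other. Since $\gamma$ fixes $\infty$ and lies in $\Gamma\subset\pslz$, it is a power of $T$; by Lemma~\ref{lem:fixed} the full stabilizer is $\langle T^d\rangle$, so the two identified edges are $k$ apart for some $k$ with $1\le k\le d$, and in fact the loop geometry forces $k$ and $d-k$ to be the gaps on the two sides.

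\textbf{Key steps.} First I would name the two boundary vertices of the loop edge: in the developed polygon these are two finite cusps $p/q$ and $r/s$ on the real line, both lying under the cusp at $\infty$, and the loop edge is the geodesic joining them. The two triangles of the modular tessellation sitting on this edge must each also be incident to $\infty$ (this is what makes it a loop at $v=v_\infty$), which means $p/q$ and $r/s$ are each adjacent to $\infty$ in the Farey tessellation; but the only cusps Farey-adjacent to $\infty$ are the integers. So the loop edge joins two integers, say $0$ and $k$, and these two integers must differ by a unit in the relevant sense — the edge being in the tessellation forces them to be consecutive endpoints among the $d$ triangles, i.e. $k$ triangles apart on one side and $d-k$ on the other. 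Next, I would locate a short element: the edge joins $0$ to $k$, so the loop encircles the region between them, and one sees a parabolic fixing $0$ together with a translate. If $0$ has a neighbor of degree $1$, that is the first alternative in the statement and we are done. Otherwise, I would apply Lemma~\ref{tracemn} (or directly build $LR^\ast$-type words) to the configuration: the element pairing the edge at $0$ with the edge at $k$, composed with an appropriate power of $T$, yields a hyperbolic or boundary element whose trace is controlled by $\min(k,d-k)\le\lfloor d/2\rfloor$.

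\textbf{Extracting the trace bound.} The crux is that a loop at $v$ of degree $d$ splits the $d$ incident triangles into two arcs of sizes $k$ and $d-k$; the shorter arc has at most $\lfloor d/2\rfloor$ triangles. I would exhibit an element of $\Gamma$ built from the side-pairing across the loop edge whose trace equals (up to the usual $\pm$ and additive constants) this shorter count, so that $|\tr|\le\lfloor d/2\rfloor$. Writing the side-pairing as a product of $L$'s and $R$'s corresponding to the triangles in the shorter arc, the trace of the resulting word is a monotone function of the arc length, bounded above by the value at $\lfloor d/2\rfloor$. The case distinction in the statement accounts for the degenerate possibility that the shorter arc contains a degree-$1$ vertex, in which case the enclosing/containing structure from the degree-$1$ definitions intervenes and one cannot simply read off the trace; that exceptional configuration is precisely the first alternative.

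\textbf{Main obstacle.} The hardest part will be making rigorous the claim that the loop edge forces its two endpoints to be Farey-adjacent to $\infty$ and hence to be consecutive integers among the $d$ incident triangles, together with the bookkeeping that identifies which element of $\Gamma$ realizes the shorter arc. One must be careful that the element obtained is genuinely hyperbolic (or at worst has small trace) rather than accidentally parabolic or the identity, and that the degree-$1$ escape clause captures exactly the cases where the shorter-arc element degenerates. I expect the trace computation itself to be routine once the combinatorial position of the loop within the $d$ triangles at $v$ is pinned down; the geometric identification of that position is where the real work lies.
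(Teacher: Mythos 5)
Your central combinatorial idea---that the loop at $v$ splits the $d$ edge-ends at $v$ into two arcs, that the shorter arc has at most about $\lfloor d/2 \rfloor$ of them, that a dual-graph word of the form $L^{p}R$ around the shorter arc produces the small-trace element, and that the degree-one escape clause covers the degenerate side---is exactly the engine of the paper's proof. The paper, however, never develops into $\mathbb{H}^2$ at all: it argues directly on the quotient sphere that the loop $e$ separates the sphere into two components; if one component consists of a single triangle, its third vertex is a degree-1 vertex adjacent to $v$ (the first alternative); otherwise each component has at least two triangles, one component contains no more than $(d-2)/2$ of the edges incident to $v$, and the path through the dual trivalent graph on that side, of the form $L^{p}R$, gives the trace bound via the trace computations already established (Lemma \ref{tracemn} and the $L$, $R$ discussion).

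The part of your write-up that fails is precisely the geometric setup you yourself flag as the crux. A loop at $v$ means only that both endpoints of the edge project to $v$; it does not follow, after placing one lift of $v$ at $\infty$, that both endpoints of a lift of the loop edge are Farey-adjacent to $\infty$. Your conclusion that ``the loop edge joins two integers $0$ and $k$'' is impossible in the Farey tessellation for $k\geq 2$: integers $a/1$ and $b/1$ satisfy the adjacency condition $|ps-qr|=1$ only when $|a-b|=1$, so there is no such edge. The correct normalization is to lift the loop edge itself, so that one endpoint of the lift is $\infty$ and the other is a single integer $m$ lying in the $\Gamma$-orbit of $\infty$; the two triangles sharing this lift are $(m-1,m,\infty)$ and $(m,m+1,\infty)$, and there is no reason (nor any need) for an element of $\Gamma$ \emph{fixing} $\infty$ to carry one of these triangles to the other, contrary to your claim that the side-pairing across the loop is a power of $T$. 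Because the subsequent trace extraction leans on this false positioning (``$k$ triangles apart on one side and $d-k$ on the other'' as measured between two finite integer cusps), the argument as written does not go through; the fix is to drop the half-plane development entirely and run the separation argument on the sphere with the dual $L/R$ word, as the paper does.
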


\begin{proof}If $G$ contains a loop $e$ based at $v$, then there is a triangle $\mathscr{T}$ of $G$ with at least two of its three vertices at $v$. It is possible that the third vertex of $\mathscr{T}$ has degree 1 and is adjacent to $v$. If this is the case, then the loop $e$ divides the sphere into two, but one of the components has only one triangle. Suppose now that $e$ divides the sphere into two components, each with at least two triangles. Then the edges incident to $v$ are divided between these two components, and one component must contain no more than $(d-2)/2$ of these edges. On this side, there will be a path through the dual trivalent graph which begins and ends in $\mathscr{T}$ of the form $L^{p} R$, where $p \leq d-3$, and hence an element of trace no greater than $d$.  \end{proof}

\begin{lemma}\label{lem:duplicates}If the triangulation $G$ has duplicate edges, then the duplicate edges separate the sphere into two components, each of which has an even number of triangles.\end{lemma}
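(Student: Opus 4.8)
The plan is to split the statement into its two assertions and dispatch them in turn: first that the two duplicate edges bound a simple closed curve that separates the sphere, and then the parity claim, which I expect to fall out of a face--edge incidence count on each of the two resulting disks. The conclusion refers only to the triangulation $G$ (a triangulation of the sphere), so I would give a purely combinatorial/topological argument that does not invoke $\Gamma$.

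First I would observe that the two duplicate edges $e_1$ and $e_2$ share exactly their two endpoints $u$ and $v$ and, since $G$ is embedded in the sphere, have otherwise disjoint interiors. Hence $e_1 \cup e_2$ is a simple closed curve (a bigon). By the Jordan--Schoenflies theorem it separates the sphere into two open disks $A$ and $B$. Because this curve lies entirely in the $1$-skeleton of $G$, no triangle of $G$ straddles it, so the triangles are partitioned between the two sides and each closed region $\overline{A}$, $\overline{B}$ is a triangulated disk whose boundary is precisely the bigon. This already establishes the separation statement.

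For the parity, fix one side, say $A$, let $k_A$ be the number of triangles it contains, and let $E_A$ be the number of edges of $\overline{A}$ lying in its interior; the only boundary edges are $e_1$ and $e_2$. I would then count incidences between the triangles and edges of $\overline{A}$ in two ways. Summing over triangles gives $3 k_A$, since each triangle has three sides. Counting per edge, each interior edge is shared by exactly two triangles of $A$, while each of $e_1$ and $e_2$ is incident to exactly one triangle of $A$ (its other incident triangle lies in $B$). Thus $3 k_A = 2 E_A + 2$, so $3 k_A$ is even, and therefore $k_A$ is even. The identical count applied to $B$ shows $k_B$ is even, finishing the proof.

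The computation is elementary, and I expect the only delicate points to be the two verifications that make the incidence count clean: that $e_1 \cup e_2$ is genuinely embedded, so each region is a disk bounded by exactly two edges, and that no single triangle is incident to both $e_1$ and $e_2$. The latter holds because such a triangle would have two of its sides joining the same pair of vertices $u,v$, which is impossible for a triangle. Once the boundary is seen to contribute exactly $2$ to the incidence total, the odd factor of $3$ forces an even number of triangles on each side; this parity constraint is really the entire content of the lemma.
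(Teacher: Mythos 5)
Your argument is correct, but it reaches the parity conclusion by a different route than the paper. The paper performs a surgery: it deletes everything in the other component, leaving a triangulated disk whose boundary is the bigon $e_1 \cup e_2$; it then removes one of the two boundary edges, so that the bigon face merges with the adjacent triangle and the result is again a triangulation of the whole sphere, which has an even number of faces (the identity $2|E| = 3|F|$ used in the proof of Lemma \ref{degreecount}); restoring the deleted edge shows the chosen component has evenly many triangles. You instead stay on one side and double count triangle--edge incidences in the closed disk, obtaining $3k_A = 2E_A + 2$, so $3k_A$ is even and hence $k_A$ is even. Your count is more self-contained -- no capping-off step and no appeal to the global parity of sphere triangulations -- while the paper's version reuses a fact it already establishes elsewhere. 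One caution about the ``delicate point'' you flag: in this setting faces of the quotient triangulation can be degenerate (cf.\ Example 4), so one cannot rule out a priori a single triangle having two sides joining the same pair of vertices, and your stated justification for that exclusion is not airtight. Fortunately the exclusion is not needed: your identity is really a count of edge-sides, and each of $e_1$, $e_2$ has exactly one side facing $A$ whether or not those two $A$-sides belong to the same triangle (and interior edges contribute two sides each, possibly to the same triangle), so $3k_A = 2E_A + 2$ and the parity conclusion stand unchanged.
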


\begin{proof}Choose either of the two components into which the duplicate edges separate the sphere, and delete all edges and vertices in the other component. This leaves a triangulation of a disk with two outside edges, which can be seen as a bigon. Removing one of these edges $e$ creates another triangulation of the sphere, which necessarily has an even number of triangles. All but one of these belonged to the component, and adding back in the last deleted edge $e$ creates one more triangle in the given component. So the component has an even number of triangles. \end{proof}

\begin{lemma}\label{lem:deg1s}Suppose $G$ has a degree 1 vertex $v_1$, adjacent to the vertex $v_2$, and suppose that the triangle which shares the looped side has a third vertex $v_3$ of degree $d$. Then $\Gamma$ has a hyperbolic element of trace $4d-2$. \end{lemma}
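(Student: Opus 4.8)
The plan is to normalize the configuration in the upper half-plane so that the two parabolic generators we need are in standard position, and then to apply Lemma \ref{tracemn}. First I would conjugate so that the degree $1$ vertex $v_1$ sits at $\infty$. Since $v_1$ has degree $1$, Lemma \ref{lem:fixed} guarantees that $\Gamma$ contains $T = \begin{pmatrix} 1 & 1 \\ 0 & 1 \end{pmatrix}$. In the Farey tessellation the unique triangle $\mathscr{T}$ containing $v_1$ may then be taken to be the ideal triangle with vertices $\infty$, $0$, $1$, where the sides $e_{0,\infty}$ and $e_{1,\infty}$ are identified by $T$; consequently $0$ and $1$ represent the single vertex $v_2$, and the remaining side $e_{0,1}$ is the loop based at $v_2$.

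Next I would identify the enclosing triangle. The loop $e_{0,1}$ is the common side of $\mathscr{T}$ and the triangle enclosing it; in the Farey tessellation the triangle across the edge from $0$ to $1$ is the one with vertices $0$, $1$, $1/2$. Hence its third vertex $v_3$ sits at $1/2$, and by hypothesis $v_3$ has degree $d$. By Lemma \ref{lem:fixed}, $\Gamma$ therefore contains the $\pslz$-conjugate of $T^{d}$ fixing $1/2$; conjugating $T^{-d}$ by $\begin{pmatrix} 1 & 0 \\ 2 & 1 \end{pmatrix}$, which carries $\infty$ to $1/2$, gives the explicit parabolic $\begin{pmatrix} 1+2d & -d \\ 4d & 1-2d \end{pmatrix} \in \Gamma$, whose diagonal entries sum to $2$ and whose lower-left entry is $4d$.

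Finally I would invoke Lemma \ref{tracemn}, taking $m_1 = 1$ from $T$ and $m_2 = 4d$ from the lower-left entry of the element just produced. This yields an element $\gamma \in \Gamma$ with $|\tr(\gamma)| = |m_1 m_2 - 2| = 4d - 2$, which is hyperbolic once $d \geq 2$. (Concretely, $T$ times the inverse of the displayed parabolic is $\begin{pmatrix} 1-6d & 3d+1 \\ -4d & 1+2d \end{pmatrix}$, of trace $2 - 4d$.)

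I expect the only genuine subtlety to be the bookkeeping with the modular coordinates and the sign of the exponent. Both $T^{d}$ and $T^{-d}$ fix $1/2$ and both conjugates lie in $\Gamma$, so one must select the one whose lower-left entry is $+4d$ rather than $-4d$; the other choice would instead produce trace $4d+2$. Verifying that the enclosing triangle really meets $\mathscr{T}$ along the loop $e_{0,1}$, and hence that its apex is forced to the Farey vertex $1/2$, is the one step requiring care, but it follows directly from the structure of the modular tessellation described in Section 2.
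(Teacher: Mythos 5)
Your proof is correct and all the computations check out: the $\pslz$-conjugate of $T^{-d}$ fixing $1/2$ is indeed $\left(\begin{smallmatrix}1+2d & -d\\ 4d & 1-2d\end{smallmatrix}\right)$, Lemma \ref{tracemn} with $m_1=1$ and $m_2=4d$ yields $|\tr(\gamma)|=4d-2$, and your explicit product of trace $2-4d$ confirms it. The route is different in mechanics from the paper's, which disposes of the lemma in one line by exhibiting the turning word $L^4R^{d-1}$ in the dual trivalent graph; that word has trace $1+4(d-1)+1=4d-2$. You instead normalize in the Farey tessellation ($v_1$ at $\infty$, $v_2$ at $0\sim 1$ identified by $T$, $v_3$ forced to $1/2$), extract the two parabolics via Lemma \ref{lem:fixed}, and multiply. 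Since the paper itself shows, in the discussion following Lemma \ref{tracemn}, that the $L$/$R$-word computation and the product of parabolics are two views of the same calculation, the two arguments are equivalent in substance. What your version buys is the justification the paper's one-liner omits: why the enclosing triangle's apex sits at $1/2$, where the required parabolics come from, and the sign bookkeeping ($m_2=+4d$ versus $-4d$) that separates trace $4d-2$ from $4d+2$. Your caveat that the element is hyperbolic only when $d\ge 2$ is also fair; the lemma implicitly assumes this.
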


\begin{proof}There is a loop with word $L^4R^{d-1}$ which gives rises to an element of trace $4d-2$.  \end{proof}

\section{Maximizing systoles}

In this section, we consider surfaces with $n$ cusps for some small values of $n$. We find the arithmetic surface with $n$ cusps which maximizes the systole, by using the graph theory from the previous section. We make use of the equation from the following result:

\begin{lemma}\label{degreecount} For any planar triangulation, we have
\begin{equation}\label{grapheqn} \sum_{v \in V} (6 - \mathrm{deg}(v) ) = 12. \end{equation}\end{lemma}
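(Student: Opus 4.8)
The plan is to derive equation (\ref{grapheqn}) from Euler's formula for planar graphs together with the defining incidence relations of a triangulation. Let $V$, $E$, and $F$ denote the numbers of vertices, edges, and faces respectively (counting the exterior face, so that we are genuinely triangulating the sphere). Euler's formula gives $V - E + F = 2$. The key structural fact about a triangulation is that every face is bounded by exactly three edges, and every edge lies on the boundary of exactly two faces; counting incident (edge, face) pairs in two ways yields $3F = 2E$, hence $F = \tfrac{2}{3}E$. Substituting into Euler's formula gives $V - E + \tfrac{2}{3}E = 2$, i.e. $V - \tfrac{1}{3}E = 2$, or equivalently $6V - 2E = 12$.

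Next I would rewrite the left-hand side of this last identity as the sum appearing in (\ref{grapheqn}). The handshake lemma states that the sum of all vertex degrees equals twice the number of edges, $\sum_{v \in V} \mathrm{deg}(v) = 2E$. Therefore
\begin{equation*}
\sum_{v \in V} (6 - \mathrm{deg}(v)) = 6V - \sum_{v \in V} \mathrm{deg}(v) = 6V - 2E = 12,
\end{equation*}
which is exactly the claimed equation. The whole argument is a short two-step computation: convert Euler's formula using $3F = 2E$, then convert the resulting relation using the handshake lemma.

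I do not anticipate a genuine obstacle here, since this is a classical consequence of Euler's formula. The only points requiring care are bookkeeping ones. First, one must confirm that the triangulations under consideration really do satisfy $3F = 2E$ even in the presence of the degenerate features discussed earlier in the paper, namely loops and duplicate edges (as in Examples 3 and 4): a loop still bounds on two sides and contributes appropriately to the degree count, and duplicate edges are counted with multiplicity, so the two incidence counts $3F = 2E$ and $\sum \mathrm{deg}(v) = 2E$ remain valid as stated. Second, one must remember to include the exterior face among the $F$ faces, consistent with the identification of a planar triangulation with a triangulation of the sphere established in Section 2. With these conventions fixed, the identity holds for every planar triangulation, as asserted.
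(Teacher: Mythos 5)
Your proof is correct and follows essentially the same route as the paper: Euler's formula combined with the handshake lemma $\sum_{v} \mathrm{deg}(v) = 2|E|$ and the triangulation identity $2|E| = 3|F|$, assembled into $6|V| - 2|E| = 12$. Your added remarks about loops, duplicate edges, and the exterior face are consistent with the conventions the paper adopts (and explicitly notes after the lemma), so nothing is missing.
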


\begin{proof}Recall Euler's formula, $|V|-|E|+|F|=2$, where $|V|$, $|E|$ and $|F|$ are the numbers of vertices, edges and faces respectively. We note that 
\[ |V| = \sum_{v \in V} 1\]
and
\[ 2|E| = \sum_{v\in V} \mathrm{deg}(v), \]
the latter equation because adding the degrees of all vertices counts every edge twice, once at each endpoint. Furthermore, we note that 
\[ 2|E| = 3|F| \]
because we have a triangulation, so the same count of all vertex degrees can be seen as counting each triangle three times. We therefore see that 
\[ 6|V| - 6|E| + 6|F| =12 \]
and so 
\[ 6\sum_{ v \in V} 1 - 3\sum_{v\in V} \mathrm{deg}(v) + 2\sum_{v \in V} \mathrm{deg}(v) = 12 \]
and therefore combining terms gives the stated equality.  \end{proof}

Note that equation (\ref{grapheqn}) holds even in the presence of vertices of degree 1 or 2, and we have chosen our conventions with this in mind. For example, Example 3 has five vertices of degrees 2, 5, 5, 3 and 3 respectively. Example 4 has vertices of degrees 1, 2, 3 and 6.

\subsection{$n=4$} Here there is only one possible regular triangulation, with $\delta(G) \geq 3$. It has four vertices of degree 3 and these form a tetrahedron, as in Example 1. Here every edge has density 9 and so the systole has length $2 \arccosh{(7/2)}$, which matches the bound in inequality (\ref{bound}). The group here is the principal congruence subgroup $\Gamma(3)$. \\

Although the bound from (\ref{bound}) being achieved means we need not consider graphs with $\delta(G) <3$, we may see in the light of equation (\ref{grapheqn}) that no such graph can give a longer systole. Suppose $\delta(G) =2$. Then the degree 2 vertex must be adjacent to vertices of degree at least 5 to achieve a longer systole via a larger minimum edge density. But then, if the two degree 5 vertices were distinct, the fourth vertex would need to have degree 0 to satisfy equation (\ref{grapheqn}). If the degree 5 vertices were the same vertex, then there must exist two loops, as the third edges of the two triangles incident to the degree 2 vertex. But then the vertices are necessarily 1, 1, 2 and 8 respectively, and there is an edge of density 8. 

If $\delta(G)=1$, a degree 1 vertex would need to have adjacent to it a vertex of degree at least 10 to create a longer systole. But this constrains the remaining two vertices to have degrees summing to 1.

\subsection{$n=5$} Here, if $\delta(G)\geq 3$, we will have three degree 4 vertices and two degree 3 vertices. We may avoid a density 9 edge by keeping the degree 3 vertices non-adjacent. The result has all edges of density either 12 or 16. We note here that the bound given by (\ref{bound}) is $4\arccosh{(9/5)} \approx 4.77164...$ and that thus a density 12 edge provides a shorter systole. 

To see that this is the longest systole among arithmetic examples, suppose there is a degree 2 vertex. To achieve a larger minimum edge density, it must be adjacent to vertices of degree at least 7. But then by equation (\ref{grapheqn}), if these are distinct, they must both be degree 7, and then the two remaining vertices must both be degree 1. The degree 1 vertices will then contribute to an edge of density less than 12. If the degree 2 vertex is incident to duplicate edges, and hence is adjacent to only one vertex of degree at least 7, then $G$ has a loop, and Lemma \ref{lem:loops} gives a smaller trace hyperbolic element.

Similarly, if we assume there is a degree 1 vertex, then it must be adjacent to a vertex of degree at least 13 to ensure a longer systole. By equation (\ref{grapheqn}), there must be at least three vertices of degree 1, and then the remaining two vertices have degrees 1 and 14, or 2 and 13. In both cases there must exist a loop, so by Lemma \ref{lem:loops} there must be a hyperbolic element of trace 6 or 7, shorter than the systole bound given above. \\

We note that we have not been able to find a non-arithmetic example with 5 cusps with a longer systole than the arithmetic example given here.

%
%
\subsection{$n=6$} If $\delta(G) \geq 3$ and one of the six vertices has degree 5, then it is adjacent to all other 5, one of which has degree 3. This means there must be an edge of density 15. If instead we take the octahedron where each vertex has degree 4, all edges have density 16. This corresponds to the principal congruence subgroup $\Gamma(4)$ and we note that this achieves equality in (\ref{bound}).\\
\subsection{$n=7$} Every arithmetic sphere with 7 cusps has systole of length at most $2 \arccosh{(7)}$.

\begin{prop}Every planar triangulation $G$ with seven vertices has an edge with density at most $16$ or vertices arranged as in Lemma \ref{adjtriangles}.\end{prop}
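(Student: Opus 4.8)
# Proof Proposal

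The plan is to argue by contradiction using the degree equation (\ref{grapheqn}) from Lemma \ref{degreecount}, which for seven vertices reads $\sum_{v \in V}(6 - \deg(v)) = 12$, i.e. $\sum_{v} \deg(v) = 30$. I would begin by assuming that $G$ has no edge of density at most $16$ and is not configured as in Lemma \ref{adjtriangles}, and derive a contradiction. The density condition means that every edge $e = \{u,w\}$ satisfies $\deg(u)\deg(w) \geq 17$. First I would handle the \emph{regular} case $\delta(G) \geq 3$: here the average degree is $30/7 \approx 4.29$, so most vertices have degree $3$, $4$, or $5$, and the key observation is that any vertex of degree $3$ forces each of its three neighbors to have degree at least $\lceil 17/3 \rceil = 6$. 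Since a vertex of degree $3$ has exactly three neighbors, this would pump too much total degree into the graph; I would tabulate the possible degree sequences summing to $30$ with $\delta(G) \geq 3$ and show that in each one either a degree-$3$ vertex is adjacent to a vertex of degree $\leq 5$ (giving density $\leq 15$), or two low-degree vertices must be adjacent.

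The crux of the regular case is a counting argument on where the high-degree vertices can sit. With seven vertices summing to $30$, degree sequences are quite constrained: for instance $(3,3,4,4,4,4,4) = 26$ is too small, so we actually need sequences like $(3,4,4,4,5,5,5)$, $(3,3,4,5,5,5,5)$, $(4,4,4,4,4,4,6)$, and so on. For each candidate sequence I would show that the density-$\geq 17$ requirement on every edge is incompatible with planarity and the degree constraints — concretely, a degree-$3$ vertex needs all neighbors of degree $\geq 6$, but the number of vertices of degree $\geq 6$ available is tightly bounded by the sum being only $30$. I expect that no regular degree sequence survives, so every regular triangulation has an edge of density $\leq 16$.

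The harder part, and where I expect the main obstacle, is the \emph{non-regular} case where $\delta(G) \leq 2$, since here Lemmas \ref{lem:loops}, \ref{lem:duplicates}, and \ref{lem:deg1s} must be invoked and the graph may have loops or duplicate edges. If there is a degree-$2$ vertex $v$, its two neighbors must each have degree $\geq 9$ (since $2 \cdot \deg = 17$ forces $\deg \geq 9$) to avoid a low-density edge; by equation (\ref{grapheqn}) this is extremely expensive and typically forces other vertices down to degree $2$, at which point two degree-$2$ vertices end up on adjacent triangles, triggering Lemma \ref{adjtriangles}. I would split into subcases according to whether $v$ has two distinct neighbors or is incident to duplicate edges (yielding a loop via Lemma \ref{lem:duplicates}), using Lemma \ref{lem:loops} to extract a short trace in the looped case. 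Similarly, a degree-$1$ vertex would need its neighbor to have degree $\geq 17$, which is flatly impossible given $\sum \deg = 30$ over seven vertices, so Lemma \ref{lem:deg1s} and the degree bound rule this out quickly.

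Finally I would assemble these cases: in every scenario either an edge of density $\leq 16$ appears directly, or the two degree-$2$ vertices sit on adjacent triangles as in Lemma \ref{adjtriangles}, completing the proof. The translation back to the systole bound $2\arccosh(7)$ follows since density $16$ gives, via Lemma \ref{densitylength}, a loop of length $2\arccosh((16-2)/2) = 2\arccosh(7)$, and the configurations of Lemma \ref{adjtriangles} produce elements of trace $14$ or $22$, the former matching $2\arccosh(7)$ exactly.
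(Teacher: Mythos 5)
Your overall strategy matches the paper's: use $\sum_v \deg(v)=30$ from Lemma \ref{degreecount}, force the neighbors of low-degree vertices to have high degree, and invoke a $K_{3,3}$/planarity obstruction; your treatment of the regular case and of a degree-$2$ vertex with two distinct high-degree neighbors is essentially the published argument. However, there are two genuine gaps. The first is in your $\delta(G)=1$ case: the claim that a neighbor of degree at least $17$ is ``flatly impossible given $\sum\deg = 30$'' is false. For example, the degree sequence $(17,8,1,1,1,1,1)$ sums to $30$, and degenerate ``flower''-type triangulations with one vertex of very large degree really do occur (compare the degree-$39$ flower the paper must handle when $n=10$; for $n=7$ a flower with six degree-$1$ petals has a center of degree $24$). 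The paper closes this case differently: such a large degree forces a loop not enclosing a degree-$1$ vertex, and Lemma \ref{lem:loops} then yields a hyperbolic element of small trace. Note also that Lemma \ref{lem:deg1s}, which you cite, gives trace $4d-2$ with $d$ the degree of the third vertex of the enclosing triangle, so it only helps when that $d$ happens to be small; it does not dispose of the case on its own.

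The second gap is the case $\delta(G)\ge 3$ with $G$ not regular, i.e.\ with duplicate edges (or loops) even though every vertex has degree at least $3$. Your degree-sequence tabulation and adjacency counts tacitly assume simple edges, so this case needs its own argument; the paper uses Lemma \ref{lem:duplicates} to show the duplicate edges split the ten triangles into components of sizes $2+8$ or $4+6$, rules out the first by $\delta(G)\ge 3$, and in the second finds either a $(3,3)$ edge or the configuration of Lemma \ref{adjtriangles}. A smaller observation: in your looped subcases the natural conclusion is ``a hyperbolic element of small trace,'' which is what the systole bound ultimately needs but is not literally one of the two alternatives in the statement of the proposition; the paper's own proof shares this looseness, but you should be explicit that those branches reduce to the trace bound rather than to the stated graph-theoretic dichotomy.
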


\begin{proof}Suppose $G$ is regular, has a degree 3 vertex, and $\delta(G)=3$. If it is adjacent to a vertex of degree 3, 4 or 5, we are done. Thus, suppose its three neighbors are all degree 6 vertices. By equation \ref{grapheqn}, the remaining vertices must all have degree 3. Then either two such are adjacent, meaning there is an edge of density 9, or each of the degree 3 vertices is adjacent to each of the degree 6 vertices. In this latter case, the graph contains a $K_{3,3}$ subgraph, and hence cannot be planar. Thus, if $G$ has a degree 3 vertex, it has an edge of density at most 15.

If $G$ does not have a degree 3 vertex and $\delta(G) \geq 4$, then, in light of equation \ref{grapheqn}, it must have either six degree 4 vertices, and one degree 6 vertex, or five degree 4 and two degree 5 vertices. In both cases, there must be two degree 4 vertices adjacent to one another, and so an edge of density 16 (see Figure \ref{fig:casen7}).

If $\delta(G) \geq 3$ but $G$ has duplicate edges, then since $G$ has ten triangles, by Lemma \ref{lem:duplicates}, one of the components has 2 or 4 triangles. If 2 triangles, $\delta(G) =2$, a contradiction, and if 4, then either there are two adjacent vertices, each of degree 3, or Lemma \ref{adjtriangles} applies and gives an element of trace at most 14.

If $\delta(G) =2$, then suppose $G$ has a degree 2 vertex adjacent to two vertices of degrees at least 9. The remaining four vertices have degrees which sum to at most 10, so there must be at least two of these of degree 2. But these must be adjacent to the two vertices of degree 9, meaning that Lemma \ref{adjtriangles} applies and gives an element of trace 14. If the degree 2 vertex has both edges incident to one vertex of degree at least 9, then $G$ has a loop. Since $\delta(G)=2$ the largest the degree of a vertex can be is 18. By Lemma \ref{lem:loops}, it then follows that there must be a hyperbolic element of trace at most 9.

If $\delta(G) = 1$, then suppose that each degree 1 vertex is adjacent to a vertex of degree at least 17. This degree implies that there is a loop not enclosing a degree 1 vertex, and hence there is a hyperbolic element of trace at most $9$.  \end{proof}
%
%
\begin{figure}[htb]
\begin{center}
\begin{tikzpicture}
\draw (-4,0.15) -- (-2.5,-1) -- (2.5,-1) -- (4,0.15);
\draw (0,4) -- (-4,0.15)  -- (0,-4);
\draw (0,4) -- (4,0.15)  -- (0,-4);
\draw (0,4) -- (-2.5,-1)  -- (0,-4);
\draw (0,4) -- (2.5,-1)  -- (0,-4);
\draw[dashed] (0,4) -- (0,1);
\draw[dashed]  (4,0.15) -- (0,1) -- (-4,0.15);

\draw[ultra thick] (0,4) -- (-2.5,-1) -- (0,-4);
\draw[ultra thick] (-4,0.15)  -- (0,-4);
\draw[ultra thick] (4,0.15)  -- (0,-4);
\draw[ultra thick] (2.5,-1)  -- (0,-4);
\draw[ultra thick, dashed] (0,1) -- (0,-4);

\end{tikzpicture}
\caption{One triangulation for $n=7$ with a spanning tree in bolded lines}
\label{fig:casen7}
\end{center}
\end{figure}
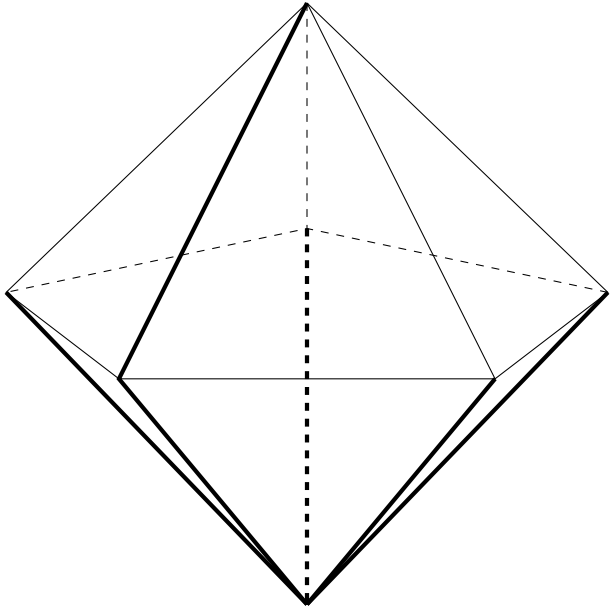
We may improve on this by constructing a non-arithmetic example as follows. First, we note that the arithmetic example given in Figure \ref{fig:casen7}, if we place $\infty$ at the top vertex,  can be seen as being generated by the matrices
\[ a_1 = \begin{pmatrix}1 & 5 \\ 0 & 1\end{pmatrix}, a_2 = \begin{pmatrix}1 & 0 \\ 4 & 1\end{pmatrix}, a_3 = \begin{pmatrix}5 & -4 \\ 4 & -3\end{pmatrix}, a_4 = \begin{pmatrix}9 & -16 \\ 4 & -7\end{pmatrix}, a_5 = \begin{pmatrix}13 & -36 \\ 4 & -11\end{pmatrix}, a_6 = \begin{pmatrix}17 & -64 \\ 4 & -15 \end{pmatrix}.     \]   
We may perturb this and take the group generated by
\[ b_1 = \begin{pmatrix}1 & 5 \\ 0 & 1\end{pmatrix}, b_2 = \begin{pmatrix}1 & 0 \\ 4 & 1\end{pmatrix}, b_3 = \begin{pmatrix}53/11 & -441/110 \\ 40/11 & -31/11 \end{pmatrix}, b_4 = \begin{pmatrix}47/5 & -441/25 \\ 4 & -37/5\end{pmatrix},\]
\[ b_5 = \begin{pmatrix}1331/97 & -380689/9700 \\ 400/97 & -1137/97\end{pmatrix}, b_6 = \begin{pmatrix}569/31 & -217083/3100 \\ 400/93 & -507/31 \end{pmatrix}.     \]   
This has the effect of moving and slightly changing the waist sizes of the cusps. The group is again cofinite, and generated by 6 parabolics so gives a surface with 7 cusps. The five words which previously all had trace 14 now have traces 14.0364, 14.0364, 14.0037, 14.0071, 14.0211 respectively, and none of the elements which previously had trace 18 or more have been reduced below 14. To check this latter assertion, we may use Lemma 2.3 of Benson--Lakeland--Then \cite{BLT}. Hence, this is one example of a non-arithmetic surface with 7 cusps which has a longer systole than any arithmetic surface of the same type. We note that this example does not maximize the systole, as we may perturb further, and so one may ask what length systole may be achieved in this manner, and whether another construction may produce an even longer systole in this case.\\
\subsection{$n=8$} 

\begin{prop}Every regular planar triangulation with $8$ vertices has an edge of density at most $18$. \end{prop}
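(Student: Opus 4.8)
The plan is to argue by contradiction: I assume every edge of $G$ has density at least $19$ and derive a contradiction, which forces some edge to have density at most $18$. Since $G$ is regular, all vertex degrees lie between $3$ and $7$ (the upper bound because $G$ is simple on $8$ vertices), and by Lemma \ref{degreecount} the degrees sum to $6\cdot 8 - 12 = 36$, so $|E| = 18$. Listing which products of degrees are at most $18$, the only ``good'' edges (density $\le 18$) are those joining a degree-$3$ vertex to a vertex of degree at most $6$, and those joining two degree-$4$ vertices; my standing assumption is exactly that no such edge occurs.

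First I would eliminate degree-$3$ vertices. If $v$ has degree $3$, then to avoid a good edge each of its three neighbors must have degree at least $7$, hence exactly $7$. But a degree-$7$ vertex is adjacent to all other seven vertices, so three such vertices together with any three of the remaining vertices form a $K_{3,3}$ subgraph, contradicting planarity. Hence $\delta(G) \ge 4$, and now the only possible good edge joins two degree-$4$ vertices; the assumption therefore reduces to the statement that the degree-$4$ vertices form an independent set.

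Next I would enumerate the admissible degree sequences. With degrees in $\{4,5,6,7\}$ summing to $36$ over $8$ vertices, subtracting four times the vertex count from the degree sum yields $n_5 + 2n_6 + 3n_7 = 4$ (where $n_d$ denotes the number of degree-$d$ vertices), whose nonnegative solutions give exactly the four sequences $(4,4,4,4,5,5,5,5)$, $(4,4,4,4,4,5,5,6)$, $(4,4,4,4,4,4,6,6)$, and $(4,4,4,4,4,4,5,7)$. Let $S$ be the set of vertices of degree at least $5$. If the degree-$4$ vertices are independent then every edge meets $S$, so $18 \le \sum_{v \in S} \deg(v)$. For the last three sequences this sum is $16$, $12$, and $12$ respectively, each strictly less than $18$, giving an immediate contradiction.

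The remaining sequence $(4^4,5^4)$ is where I expect the main obstacle, since here $\sum_{v \in S} \deg(v) = 20 \ge 18$ and the crude counting bound no longer suffices. Instead I would argue structurally: if the four degree-$4$ vertices are independent, each has all four of its neighbors in $S$, and since $|S| = 4$ this means every degree-$4$ vertex is adjacent to every degree-$5$ vertex. That produces a complete bipartite graph $K_{4,4}$, and hence a $K_{3,3}$, between the two classes, once more contradicting planarity. Having eliminated all four degree sequences, no regular triangulation with all edges of density at least $19$ exists, which completes the proof.
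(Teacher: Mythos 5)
Your proof is correct and follows essentially the same route as the paper's: both eliminate degree-$3$ vertices via a $K_{3,3}$ obstruction coming from degree-$7$ neighbors, and both reduce the $\delta(G)=4$ case to the degree sequence $(4^4,5^4)$, where independence of the degree-$4$ vertices forces a non-planar $K_{4,4}$. Your edge-counting bound $18 \le \sum_{v \in S} \deg(v)$ for the other three degree sequences is just a minor variation on the paper's pigeonhole observation that five or more independent degree-$4$ vertices cannot all find four distinct neighbors among the remaining vertices.
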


\begin{proof}Consider the tetrahedron with four degree 3 vertices. Stellate each face; that is, introduce a new vertex inside each face, and join it to each vertex of that face with an edge. The result has eight vertices. Each original vertex now has degree 6, and each new vertex has degree 3, but no two new vertices are adjacent. Hence, the minimum edge density here is 18.  

In the presence of a degree 3 vertex, with $G$ regular, this is the best one can do. For example, if there were a degree 3 vertex only adjacent to three degree 7 vertices, then the remaining 4 vertices must all have degree 3. In order for there to be no $(3,3)$ edge, each degree 3 vertex must be adjacent to all three degree 7 vertices. But then $G$ contains a $K_{3,3}$ subgraph, and hence is not planar.

If $G$ is regular and the minimum degree is $\delta(G)=4$, then in view of equation (\ref{grapheqn}), there must be at least four vertices of degree 4. If more than four, then two such must be adjacent, creating an edge of density 16. If exactly four, then the other four vertices have degree 5. If each degree 4 vertex is only adjacent to degree 5 vertices, then each degree 5 vertex is also adjacent to all of the degree 4 vertices, creating a $K_{4,4}$ subgraph. Thus in this case there must be an edge of density 16.\end{proof}

If $\delta(G)\geq 3$ but $G$ is not regular, then $G$ has a loop. Since the vertex degrees sum to 36, the maximum degree of a vertex in this case is $36 - (3 \times 7) = 15$, and thus by Lemma \ref{lem:loops}, $\Gamma$ has a hyperbolic element of trace at most $7$.

If $\delta(G) = 2$, then each degree 2 vertex must be adjacent to vertices of degrees at least 10. Since the sum of degrees is 36, the remaining five vertex degrees sum to at most 14, so there must be a second degree 2 vertex. But this must also be adjacent to the two degree 10 vertices, and Lemma \ref{adjtriangles} applies and gives an element of trace 14. If the degree 2 vertex is adjacent to one vertex of degree at least 10, this degree must be at most $36-(2\times 7) = 22$, and there must be a loop, so by Lemma \ref{lem:loops}, $\Gamma$ has a hyperbolic element of trace at most 11.

If $\delta(G) =1$, to increase the systole, the degree 1 vertex must be adjacent to a vertex $v$ of degree at least 19. This degree must be at most 29. There must be a loop based at this vertex, so Lemma \ref{lem:loops} gives that $\Gamma$ has a hyperbolic element of trace at most 14.

With 8 cusps, the systole among arithmetic examples is at most $2\arccosh{(8)}$. \\
\subsection{$n=9$} 

\begin{prop}Every regular planar triangulation with $9$ vertices has an edge of density at most $20$. \end{prop}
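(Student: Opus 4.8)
The plan is to follow the template of the $n=7$ and $n=8$ cases: combine the degree identity (\ref{grapheqn}) with the non-planarity of $K_{3,3}$, arguing by contradiction. Since $G$ is regular with nine vertices, Lemma \ref{degreecount} gives $\sum_{v\in V}\mathrm{deg}(v) = 6\cdot 9 - 12 = 42$. Regularity forces $\delta(G)\geq 3$, and if every vertex had degree at least $5$ the degree sum would be at least $45>42$; hence $\delta(G)\in\{3,4\}$, and it suffices to treat these two cases. Throughout I would assume, toward a contradiction, that every edge has density at least $21$ (densities being integers), and derive an impossible count of vertex degrees.

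For the case $\delta(G)=4$, every vertex has degree at least $4$, so the total excess $\sum_{v}(\mathrm{deg}(v)-4) = 42-36 = 6$. Any degree-$4$ vertex adjacent to a vertex of degree at most $5$ produces density at most $20$, so under our assumption each degree-$4$ vertex must have all four of its (distinct, by regularity) neighbors of degree at least $6$. But a vertex of degree at least $6$ carries excess at least $2$, so there are at most $\lfloor 6/2\rfloor = 3$ such vertices --- too few to provide four distinct high-degree neighbors. This contradiction produces the required edge of density at most $20$.

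For the case $\delta(G)=3$, fix a degree-$3$ vertex $v$. If any neighbor of $v$ has degree at most $6$ we already have an edge of density at most $18$, so assume its three neighbors $A,B,C$ all have degree at least $7$. Then the remaining five vertices have degrees summing to at most $42-(3+21)=18$, and a short check shows none of them can have degree at least $7$ (otherwise the other four would be forced below the minimum degree $3$); thus $A,B,C$ are the only vertices of degree at least $7$. Since under our assumption every degree-$3$ vertex must have all three neighbors of degree at least $7$, every degree-$3$ vertex is adjacent to exactly $A,B,C$. If there were three such vertices they would form a $K_{3,3}$ with $A,B,C$, violating planarity; hence there are at most two degree-$3$ vertices, so at least four of the five remaining vertices have degree at least $4$. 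Their degrees then sum to at least $4\cdot 4 + 3 = 19 > 18$, the final contradiction.

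The main obstacle is the degree-$3$ case, where one must exclude the ``spread-out'' configuration in which several degree-$3$ vertices share the same three high-degree neighbors; this is precisely where the $K_{3,3}$ planarity obstruction does the work, exactly as in the $n=7,8$ arguments. I would note that both cases use the exact value $42$ in an essential way, so the argument is specific to $n=9$. Finally, I expect the bound $20$ to be sharp: the triaugmented triangular prism is a regular triangulation on nine vertices with degree sequence $(5^6,4^3)$ in which the three degree-$4$ vertices are pairwise non-adjacent and meet only degree-$5$ vertices, so its minimum edge density is exactly $4\cdot 5 = 20$. Exhibiting this example would confirm that $20$ cannot be lowered, and I would use it (as in the earlier cases) as the conjectural arithmetic systole maximizer for $n=9$.
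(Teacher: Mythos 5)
Your proof is correct and takes essentially the same approach as the paper: both cases are handled by the degree identity (\ref{grapheqn}) combined with the $K_{3,3}$ planarity obstruction, with your excess-counting in the $\delta(G)=4$ case being a slightly tighter packaging of the same idea. Your triaugmented triangular prism is exactly the extremal triangulation with degree sequence $(5^6,4^3)$ that the paper alludes to (its figure reference is missing), so exhibiting it is a worthwhile addition.
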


\begin{proof}Suppose first that $G$ is regular with minimum degree $\delta(G)= 4$. There is a spherical triangulation with three vertices of degree 4 and six of degree 5, where no two degree 4 vertices are adjacent (see Figure ). If there were a degree 4 vertex adjacent to four degree 6 vertices, then by equation (\ref{grapheqn}), some of the remaining vertices would have to have degree 3, putting us in the next case.

Now suppose that $G$ is regular and there is a vertex of degree 3. If it is adjacent to a vertex of degree 6 or lower, then there is an edge of density less than 20. So, suppose it is adjacent only to vertices of degree 7 or 8. But then, by equation (\ref{grapheqn}), there must be at least two more vertices of degree 3. In order to not have an edge of density lower than 20, these must all be adjacent to the three vertices of degrees 7 or 8. But then there is a $K_{3,3}$ subgraph and the graph is not planar.\end{proof}

If $\delta(G) \geq 3$ but $G$ is not regular, then there is a loop. By equation (\ref{grapheqn}), the maximum degree of the vertex where the loop is based is 18, if there are eight other degree three vertices. But if there is such a loop, there is a hyperbolic element of trace at most 9 by Lemma \ref{lem:loops}.

If $\delta(G)=2$, then each degree 2 vertex must be adjacent to vertices of degrees at least 11. Since the sum of degrees is 42, the remaining six vertex degrees sum to at most 18, so either there is another degree 2 vertex, or there are six degree 3 vertices. In the former case, a second degree 2 vertex would need the degree 11 vertices as neighbors, putting us in the situation where Lemma \ref{adjtriangles} applies and gives an element of trace 14. In the latter case, the degree 3 vertices cannot all be adjacent only to the degree 11 vertices, so there must be an edge of lower density. If the vertex with degree 11 or more is unique, then its degree must be at most 24 by equation (\ref{grapheqn}), and so Lemma \ref{lem:loops} gives a hyperbolic element of trace at most 12.

If $\delta(G)=1$, then to give a systole corresponding to an element of trace larger than 18, any degree 1 vertex must be adjacent to a vertex of degree at least 21. This degree is at most 34 by equation (\ref{grapheqn}). Here there must be a loop, and Lemma \ref{lem:loops} gives a hyperbolic element of trace at most 17.

With 9 cusps, the systole among arithmetic examples is at most $2\arccosh{(9)}$. \\
\subsection{$n=10$} 

\begin{prop}Every regular planar triangulation with $10$ vertices has an edge of density at most $20$.\end{prop}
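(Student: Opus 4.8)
The plan is to adapt the casework that established the $n=7,8,9$ propositions, now specialized to ten vertices where the vertex degrees sum to $2|E| = 3|F|/1$; by equation (\ref{grapheqn}) we have $\sum_{v}(6-\deg v)=12$, so the ``excess'' degree $\sum_v(\deg v - 6)$ over a hexagonal average is tightly controlled. I would organize the argument by the minimum degree $\delta(G)$, handling $\delta(G)\geq 3$ with $G$ regular first, then the non-regular $\delta(G)\geq 3$ case, then $\delta(G)=2$, and finally $\delta(G)=1$.

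First, the regular case. If $G$ has a degree $3$ vertex $v$, then to avoid an edge of density at most $20$, all three neighbors of $v$ must have degree $7$ or more. Using equation (\ref{grapheqn}) I would show this forces several further low-degree vertices elsewhere, and then argue as in the $n=7,8,9$ proofs that avoiding a low-density edge among these forces a $K_{3,3}$ (or similar forbidden bipartite) subgraph, contradicting planarity. If $\delta(G)=4$ with no degree $3$ vertex, equation (\ref{grapheqn}) pins down the degree sequence quite rigidly (for $n=10$ the total degree is $48$, so with $\delta(G)\ge 4$ one gets combinations like several degree $4$ vertices forced to be mutually adjacent or adjacent to moderate-degree vertices); I expect to show that one is always forced into a $(4,5)$ or $(4,4)$ edge, giving density at most $20$. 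If $\delta(G)=5$, the only degree sequence compatible with (\ref{grapheqn}) is all vertices of degree slightly above $5$ on average, and I would check directly that a low-density edge persists --- this is the icosahedron-like regime and should be the cleanest subcase.

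Next, the irregular and small-$\delta$ cases, which I would handle by invoking the loop and adjacent-triangle lemmas rather than by planarity. For $\delta(G)\geq 3$ with a loop, equation (\ref{grapheqn}) bounds the maximum degree (the base of the loop can have degree at most $48-(3\times 9)=21$), and Lemma \ref{lem:loops} then yields a hyperbolic element of trace at most $\lfloor d/2\rfloor \le 10$, far below the target. For $\delta(G)=2$, a degree $2$ vertex avoiding a short systole must be adjacent to vertices of degree at least $10$; the residual degree budget from (\ref{grapheqn}) then forces either a second degree $2$ vertex adjacent to the same high-degree vertices --- triggering Lemma \ref{adjtriangles} and an element of trace $14$ --- or a loop, again dispatched by Lemma \ref{lem:loops}. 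The $\delta(G)=1$ case is analogous: a degree $1$ vertex would need a neighbor of degree at least $21$, whose degree is in turn bounded by (\ref{grapheqn}), producing a loop and hence a small-trace element via Lemma \ref{lem:loops}.

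The main obstacle I anticipate is the regular $\delta(G)=4$ subcase, where the degree sequence is least constrained and the planarity argument must rule out a configuration in which every degree $4$ vertex is adjacent only to vertices of degree $6$ or higher. Here the bookkeeping is most delicate: I would need to count incidences between the low-degree and high-degree classes carefully, show the putative bipartite adjacency pattern embeds a forbidden subgraph (a $K_{3,3}$ or a $K_{4,4}$ minor as in the $n=8$ proof), and thereby force a same-class or $(4,5)$ edge. Making the excess-degree accounting from equation (\ref{grapheqn}) interact cleanly with planarity to close every branch --- rather than leaving a stray degree sequence unaddressed --- is the step most likely to require care.
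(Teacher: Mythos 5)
Your plan for the regular $\delta(G)=3$ subcase has a genuine gap, and this is exactly where the $n=10$ case stops being like $n=7,8,9$. Write the degree sum as $48$ and let $v_1$ be a degree $3$ vertex with neighbors $w_1,w_2,w_3$ of degree at least $7$. Equation (\ref{grapheqn}) allows up to \emph{four} vertices of degree at least $7$, and in that four-vertex situation your proposed ``force a $K_{3,3}$'' template does not close: the several degree $3$ vertices that the degree count produces are each adjacent to some $3$-element subset of $\{w_1,w_2,w_3,w_4\}$, and with four available triples the pigeonhole principle does not force three of them to share a triple, so no $K_{3,3}$ subgraph is guaranteed (indeed the bipartite incidence pattern can be cube-like and planar on its own). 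The paper's proof needs two ideas you do not have: it deletes $v_1$ (and later possibly a second degree $3$ vertex $v_2$) and invokes the already-proved $n=9$ and $n=8$ propositions to manufacture the additional low-degree vertices in controlled positions, and in the ``exactly four vertices of degree at least $7$'' case it abandons the $K_{3,3}$ route entirely, instead examining the link cycle of $w_3$ (which is adjacent to $v_1,w_1,w_2$ but not $v_2$) to show two low-degree vertices must be consecutive there, giving an edge of density at most $16$. Relatedly, you locate the main difficulty in the $\delta(G)=4$ subcase, but that is the easy part: if a degree $4$ vertex had only degree $6$ neighbors, (\ref{grapheqn}) forces the other five vertices to have degree $4$, and a short incidence count ($12$ available edge-ends from the degree $6$ vertices versus $20$ needed) finishes it; no forbidden-minor argument is required.

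Two smaller points. Your $\delta(G)=5$ subcase is vacuous for $10$ vertices, since the degree sum would be at least $50>48$; and your description of it is backwards --- in an all-degree-$5$ triangulation every edge has density $25$, so no low-density edge would ``persist,'' which is precisely why it matters that this case cannot occur here. Finally, the proposition concerns \emph{regular} triangulations only, so your $\delta(G)\le 2$, loop, and duplicate-edge branches (which do appear in the paper, but outside this proof, in bounding the systole of arithmetic examples) are not needed for the statement; including them is harmless but does not repair the missing argument above.
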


\begin{proof} Suppose first that $G$ is regular, and the minimum degree of the graph is $\delta(G)=4$. There is a graph with eight degree 5 vertices and two degree 4 vertices, where the two degree 4 vertices are non-adjacent -- see Example 2, and Figure \ref{fig:casen10}. Suppose a degree 4 vertex was adjacent to only degree 6 vertices. Then by equation (\ref{grapheqn}) the other five vertices must all have degree 4. But then the degree 6 vertices only have three outgoing edges to these, making a total of 12, but the five degree 4 vertices have 20 edges outgoing. Thus, two of the degree 4 vertices must be adjacent.

Now suppose that $G$ is regular and $\delta(G)=3$. To obtain a longer systole, suppose that every edge of $G$ has density greater than 20. Let $v_1$ be such a degree 3 vertex, and let $w_1, w_2, w_3$ denote its neighbors, whose degrees must be at least 7. Then deleting that vertex produces a 9-vertex spherical triangulation $G'$, which by the above case $n=9$ has an edge of density at most 20. This edge must be one whose density was reduced by the deletion, so it must have one vertex which was adjacent to the deleted vertex, i.e. one vertex is $w_1, w_2$ or $w_3$. This vertex has degree at least 6 after the deletion, and so the other vertex $v_2$ on the edge of density at most 20 must be degree 3. In $G$, $v_2$ must also be adjacent to only vertices of degree at least 7. If there are two (or more) additional vertices $w_4,w_5$ of $G$ of degree at least 7, then the degrees of $v_1,v_2,w_1,w_2,w_3,w_4$ and $w_5$ sum to at least 41, meaning that by equation (\ref{grapheqn}) there must be a vertex of degree at most 2, a contradiction. If there are none, and $v_2$ is also adjacent to $w_1, w_2$ and $w_3$, then deleting both $v_1$ and $v_2$ produces an 8-vertex triangulation $G''$, which by a previous case must have an edge of density at most 18. If this edge is not incident to $w_1, w_2$ or $w_3$ then it also has density 18 in $G$, a contradiction. So it is incident to one of these vertices, each of which has degree at least 5 in $G''$. Thus the other vertex $v_3$ has degree 3, and also has degree 3 in $G$. But then $v_3$ is adjacent to $w_1, w_2$ and $w_3$, and so $v_1, v_2$ and $v_3$ and $w_1, w_2$ and $w_3$ form a $K_{3,3}$. It then remains to consider the case that there are exactly four vertices of degree at least 7. Suppose, without loss of generality, that $v_2$ is adjacent to $w_1, w_2$ and $w_4$. Then by equation (\ref{grapheqn}), the remaining four vertices have degree which sum to at most 14. Therefore at least two of these degrees must be 3, and none may be greater than 5. Since $w_3$ is adjacent to $v_1$, $w_1$ and $w_2$, and not to $v_2$, $w_3$ must be adjacent to at least three of these vertices, and at least two must be adjacent to one another in the link of $w_3$. The edge incident to these two vertices has density at most 16, a contradiction.\end{proof}

If $\delta(G) \geq 3$ but $G$ is not regular, then $G$ must have a loop. By the constraints imposed by equation (\ref{grapheqn}) and the minimum degree being at least 3, the maximum degree of any vertex is 21. But then, by Lemma \ref{lem:loops}, there must be a hyperbolic element of trace at most 11.

If $\delta(G) \geq 2$ with no loops, suppose there is no edge of density $D \leq 20$. Then there exists a vertex $v_1$ of degree 2, which must be adjacent to two vertices $w_1, w_2$ of degrees at least 11. The remaining seven vertices then have degrees which sum to at most 24, so either there is a second degree 2 vertex, or there are at least four degree 3 vertices. If there is a second degree 2 vertex $v_2$, then either $v_2$ is adjacent to $w_1$ and $w_2$ -- in which case Lemma \ref{adjtriangles} gives an element of trace 14 -- or there must be a third vertex $w_3$ of degree at least 11. Then, the degrees of $w_1, w_2$ and $w_3$ sum to at least 33, meaning the other seven vertex degrees sum to at most 15. There are then six degree 2 vertices. At least two of these six must both be adjacent to either $\{ w_1, w_2 \}$, $\{ w_1, w_3 \}$, or $\{ w_2, w_3 \}$, and again Lemma \ref{adjtriangles} gives an element of trace 14. If there are no other degree 2 vertices, then there are at least four degree 3 vertices, $y_1, y_2, y_3, y_4$. Since there are no edges with density 20 or less, each of these must be adjacent only to vertices of degrees at least 7. Thus, there must be a third vertex $w_3$ of degree at least 7. If only three such, then $y_1, y_2, y_3$ and $w_1, w_2, w_3$ form a $K_{3,3}$. So ther emust also be a fourth such vertex $w_4$ with degree at least 7. Then, the degrees of the eight vertices, four $y_i$ and four $w_i$, sum to at least 48. This contradicts equation (\ref{grapheqn}).

If $\delta(G)=2$ with loops present, then the maximum degree of a vertex is 30, so Lemma \ref{lem:loops} gives a hyperbolic element of trace at most 15.

If $\delta(G)=1$, then the largest possible degree for a vertex is 39. If this were 37 or below, Lemma \ref{lem:loops} would give a hyperbolic element of trace at most 18, as required. It therefore remains to consider a triangulation $G$ with one vertex of degree 38 or 39. If there is a vertex of degree 39, then all other vertices have degree 1 and are arranged as a flower around the degree 39 vertex. In this case, then selecting a loop which encloses two degree 1 vertices, there must be a loop which creates a trace 7 element corresponding to a word of the form $L^5R$. If degree 38, the same applies, as there must be one degree 2 vertex, but among the 8 degree 1 vertices the same is true.

With $10$ cusps, the systole among arithmetic examples is at most $2\arccosh{(9)}$. \\

To obtain a longer systole with a non-arithmetic example, we adapt Example 2 above.

\begin{figure}[htb]
\begin{center}
\begin{tikzpicture}[scale=0.7]
\draw (0,-2) -- (0,2) -- (2,0) -- (-2,0) -- (0,-2);
\draw (-2,0) -- (0,2);
\draw (0,-2) -- (2,0);
\draw (3,3) -- (3,-3) -- (-3,-3) -- (-3,3) -- (3,3);
\draw (3,3) -- (2,0) -- (3,-3) -- (0,-2) -- (-3,-3) -- (-2,0) -- (-3,3) -- (0,2) -- (3,3);
\draw[ultra thick] (-3,3) -- (0,6) -- (3,3);

\draw[ultra thick] (0,6) .. controls (-8,3) and (-4,-2) .. (-3,-3);
\draw[ultra thick] (0,6) .. controls (8,3) and (4,-2) .. (3,-3);

\draw (0,0) -- (2,0) -- (0,2);
\draw (2,0) -- (0,-2) -- (-2,0);
\draw (3,3) -- (2,0) -- (3,-3);
\draw (-3,-3) -- (3,-3);
\draw (-3,3) -- (3,3) -- (0,6);

\draw (0.35,0.25) node {$v_\infty$};
\draw (1.5,0.2) node {$v_0$};
\draw (0.3,1.4) node {$v_1$};
\draw[ultra thick] (0,0) -- (2,0) -- (3,-3);
\draw[ultra thick] (0,2) -- (3,3);
\draw[ultra thick] (0,-2) -- (-3,-3);
\draw[ultra thick] (-2,0) -- (-3,3);

\end{tikzpicture}
\caption{Another spanning tree for Example 2}
\label{fig:long10}
\end{center}
\end{figure}
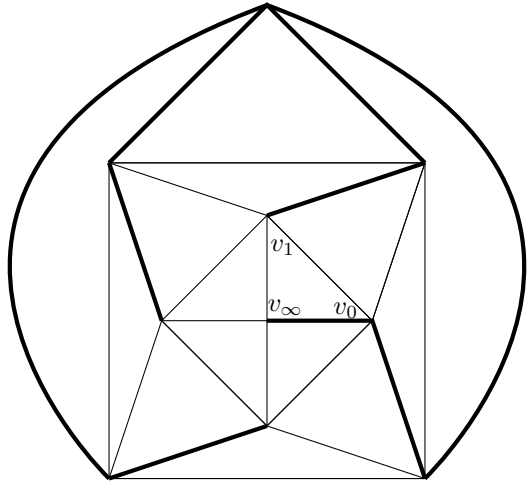

Developing the triangulation started in Figure \ref{fig:long10}, and with Lemma \ref{lem:fixed} in mind, we see that $\infty$ is fixed by $T^4$, and, respectively, the points $0,1,2,3,4, 1/2, 3/2, 5/2,$ and $7/2$ are fixed by conjugates of $T^5$. Using these as generators for the group $\Gamma$, we find that $\Gamma$ is generated by the following elements:

\[ \gamma_1 = \begin{pmatrix} 1 & 4 \\ 0 & 1\end{pmatrix}, \ \ \  \gamma_2 = \begin{pmatrix} 1 & 0 \\ 5 & 1\end{pmatrix}, \ \ \  \gamma_3 = \begin{pmatrix} 6 & -5 \\ 5 & -4\end{pmatrix}, \ \ \  \gamma_4 = \begin{pmatrix} 11 & -20 \\ 5 & -9\end{pmatrix}, \ \ \ \gamma_5 = \begin{pmatrix} 16 & -45 \\ 5 & 14\end{pmatrix} \]
\[ \gamma_6 = \begin{pmatrix} 11 & -5 \\ 20 & -9\end{pmatrix}, \ \ \  \gamma_7 = \begin{pmatrix} 31 & -45 \\ 20 & -29\end{pmatrix}, \ \ \  \gamma_8 = \begin{pmatrix} 51 & -125 \\ 20 & -49\end{pmatrix}, \ \ \  \gamma_9 = \begin{pmatrix} 71 & -245 \\ 20 & -69\end{pmatrix} \]

The following figure gives a Ford fundamental domain for this group:


With these generators, the eight words which correspond to the systoles, and which have trace 18, are: 

\[ \gamma_2 \gamma_1^{-1}, \gamma_3 \gamma_1^{-1}, \gamma_4 \gamma_1^{-1}, \gamma_5 \gamma_1^{-1}, \]
\[  \gamma_2 \gamma_1^{-1} \gamma_9 \gamma_5 \gamma_8 \gamma_4 \gamma_7 \gamma_3,  \gamma_7 \gamma_3 \gamma_2 \gamma_1^{-1} \gamma_9 \gamma_5 \gamma_8 \gamma_4 , \gamma_8 \gamma_4 \gamma_7 \gamma_3 \gamma_2 \gamma_1^{-1} \gamma_9 \gamma_5 , \gamma_9 \gamma_5 \gamma_8 \gamma_4 \gamma_7 \gamma_3 \gamma_2 \gamma_1^{-1}  \]

We perturb this example to a group generated by the following: we set 
\[ P = \begin{pmatrix} 1 & 101/100 \\ 0 & 1\end{pmatrix} \]

and our generators are

\[ \alpha_1 = \begin{pmatrix} 1 & 404/100 \\ 0 & 1\end{pmatrix}, \ \ \  \alpha_2 = \begin{pmatrix} 1 & 0 \\ 499/100 & 1\end{pmatrix}, \ \ \  \alpha_3 = P \alpha_2 P^{-1}, \ \ \  \alpha_4 =P^2 \alpha_2 P^{-2}, \ \ \ \alpha_5 = P^3 \alpha_2 P^{-3} \]
\[ \alpha_6 = \begin{pmatrix} 111197/10399 & -5090299/1039900 \\ 199600/10399 & -90399/10399\end{pmatrix}, \ \ \  \alpha_7 = P \alpha_6 P^{-1}, \ \ \  \alpha_8 = P^2 \alpha_6 P^{-2}, \ \ \  \alpha_9 =P^3 \alpha_6 P^{-3} \]

We note that the corresponding words all have traces $-18.1596$. We check using Lemma 2.3 of \cite{BLT} that there are no hyperbolic elements with trace 18 or less, and thus that this example does have longer systole.

\subsection{$n=11$} 

\begin{prop}Every regular planar triangulation with $11$ vertices has an edge of density at most $20$.\end{prop}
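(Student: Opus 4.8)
The proof strategy follows the same case analysis template established for $n=7$ through $n=10$, partitioned according to the minimum degree $\delta(G)$ and the regularity of $G$. The plan is to show that every regular triangulation on eleven vertices contains an edge of density at most $20$, i.e. an edge whose two endpoints have degrees $m_1, m_2$ with $m_1 m_2 \leq 20$. The governing constraint throughout is equation (\ref{grapheqn}): $\sum_{v \in V}(6 - \mathrm{deg}(v)) = 12$, which for eleven vertices means the degrees sum to $6 \cdot 11 - 12 = 54$.

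First I would dispose of the case $\delta(G) \geq 4$. By (\ref{grapheqn}), if every vertex has degree at least $4$, the degree deficit forces several degree-$4$ vertices; I would argue via a counting argument (as in the $n=10$ case) that not all degree-$4$ vertices can be mutually non-adjacent and simultaneously adjacent only to high-degree vertices, since the high-degree vertices do not have enough incident edges to absorb them. Any two adjacent degree-$4$ vertices give an edge of density $16 \leq 20$, and a degree-$4$ vertex adjacent to a vertex of degree $5$ gives density $20$. The main work is the case $\delta(G) = 3$ with $G$ regular: here I would take a degree-$3$ vertex $v_1$ and assume for contradiction that every edge has density exceeding $20$, forcing each neighbor of $v_1$ to have degree at least $7$. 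The key tool, exactly as in the $n=10$ proof, is \emph{vertex deletion}: removing $v_1$ yields a ten-vertex triangulation $G'$, which by the preceding proposition has an edge of density at most $20$; this edge must have been created by the deletion, pinning down a second low-degree vertex adjacent to one of the high-degree neighbors $w_1, w_2, w_3$ of $v_1$. Iterating this deletion argument, combined with the sum-of-degrees bound from (\ref{grapheqn}), should force either a direct low-density edge or a $K_{3,3}$ subgraph (from three degree-$3$ vertices all adjacent to three high-degree vertices), contradicting planarity.

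I expect the main obstacle to be the bookkeeping in the $\delta(G)=3$, regular case: with eleven vertices and the looser degree budget of $54$, there is more room for the high-degree vertices, so more subcases arise than in the $n=10$ argument. Specifically, after deleting $v_1$ and locating a second degree-$3$ vertex $v_2$, one must carefully track whether $v_2$ shares all its heavy neighbors with $v_1$ (leading quickly to a $K_{3,3}$) or introduces new heavy vertices $w_4, w_5, \dots$; each new heavy vertex of degree $\geq 7$ consumes a large share of the degree budget, and I would use (\ref{grapheqn}) to cap how many such vertices can coexist before forcing a vertex of degree $\leq 2$, a contradiction under $\delta(G) = 3$. The delicate step is ensuring every branch either produces a $K_{3,3}$ (ruling out planarity) or an explicit low-density edge.

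The non-regular and low-minimum-degree cases then follow the now-standard pattern and would be handled outside the proposition statement, as in the earlier subsections: if $\delta(G) \geq 3$ but $G$ has a loop or duplicate edges, I would invoke Lemma \ref{lem:loops} or Lemma \ref{lem:duplicates} together with the maximum-degree bound from (\ref{grapheqn}) (here the degrees sum to $54$, so a single heavy vertex has degree at most $54 - 3\cdot 10 = 24$) to extract a hyperbolic element of small trace. The cases $\delta(G) = 2$ and $\delta(G) = 1$ use Lemma \ref{adjtriangles} and Lemma \ref{lem:loops} respectively, exactly mirroring the $n=10$ treatment, to conclude that the systole among arithmetic examples with eleven cusps is at most $2\arccosh(9)$, establishing the claimed bound.
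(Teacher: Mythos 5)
Your overall strategy is the same as the paper's: case analysis on $\delta(G)$ with the degree sum $54$ from equation (\ref{grapheqn}), and in the regular $\delta(G)=3$ case the deletion of a degree-$3$ vertex to invoke the $10$-vertex proposition, followed by tracking heavy (degree $\geq 7$) neighbors and hunting for a $K_{3,3}$. However, there is a genuine gap where you write that iterating the deletion and the degree budget ``should force'' a low-density edge or a $K_{3,3}$, and that the rest is bookkeeping mirroring $n=10$. The budget argument does \emph{not} close the two hardest branches. With exactly four vertices of degree at least $7$ (e.g.\ degrees $7,7,7,7$ together with low-degree vertices), equation (\ref{grapheqn}) is satisfied and no $K_{3,3}$ is immediate; the paper must argue structurally, by examining the link of one heavy vertex $w_1$ (its six forced neighbors $v_1,w_2,v_2,w_4,v_3,w_3$ already close up into a cycle, so a seventh neighbor would be adjacent to a degree-$3$ vertex), to show $w_1$ cannot in fact have degree $7$. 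With exactly five heavy vertices, the degree sequence $7,7,7,7,7$ plus six vertices of degree $3$ (or five $3$'s and a $4$) also satisfies (\ref{grapheqn}); here the paper deletes the five degree-$3$ vertices, classifies the resulting $6$-vertex triangulation via (\ref{grapheqn}) (the octahedron, or the stellation of the unique $5$-vertex triangulation), and shows no choice of five faces to stellate can reproduce $G$ without creating an edge of density $15$. Neither of these ideas appears in your plan, and neither is a routine extension of the $n=10$ argument, so as written the proof would stall exactly at these configurations.

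A smaller but similar remark applies to your $\delta(G)\geq 4$ case: the claim that the heavy vertices ``do not have enough incident edges to absorb'' the degree-$4$ vertices is the right intuition, but the admissible degree sequences (six $4$'s and five $6$'s; or four heavy vertices with a pair of degree-$5$ vertices) each need an explicit edge count --- in the paper, counting the $27$ edges against the link of a degree-$4$ vertex, and separately accounting for the edges consumed by the degree-$5$ pair --- before two degree-$4$ vertices are forced to be adjacent. Your outline of the non-regular and $\delta(G)\leq 2$ cases (loops via Lemma \ref{lem:loops}, adjacent low-degree vertices via Lemma \ref{adjtriangles}) does match the paper's treatment, which indeed places those cases outside the proposition itself.
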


\begin{proof}Suppose there is no such edge. 

First, suppose that $\delta(G) = 4$. There must exist a vertex $v_1$ of degree 4, and it must be adjacent to four vertices $w_1, w_2, w_3, w_4$, each of degree at least 6. By equation (\ref{grapheqn}), the remaining six vertices have degrees which sum to at most 26. So, at least four of these vertices, $v_2, v_3, v_4, v_5$ must have degree 4. If we suppose that there are five vertices of degrees at least 6, then the only possibility which satisfies equation (\ref{grapheqn}) is that there are six vertices of degree 4 and five vertices of degree 6. There are 27 edges of $G$. If each edge has density larger than 20, 24 of these edges must be incident to one of the $v_i$ and one of the $w_i$. But $v_1$ is incident to four of the $w_i$, and the four edges which form the link of $v_1$ are all edges incident to two of the $w_i$. So there are at least 4 edges incident to none of the $v_i$. This is a contradiction. 

If there are only four vertices of degrees at least 6, then either there is a vertex of degree 5, or there are seven vertices of degree 4, and at least one vertex of degree larger than 6. Suppose first there is a vertex $z_1$ of degree 5. Then even if $z_1$ is adjacent to $w_1, w_2, w_3$ and $w_4$, $z_1$ must be adjacent to a fifth vertex $z_2$, which must also have degree 5. If the degree 4 vertices $v_1$ through $v_5$ are not adjacent to one another, then there are 20 edges emanating from them which must have a second vertex one of the $w_i$. There are 24 edges emanating from the $w_i$, as all four must have degree 6. But 8 of these edges have $z_1$ or $z_2$ as an endpoint, leaving at most 16 to have one of the $v_i$ as an endpoint, a contradiction. Now suppose there is no vertex of degree 5. Then there are seven vertices of degree 4, which have 28 edges incident to them. There are four other vertices, with degrees summing to 26. Therefore two of the degree 4 vertices must be adjacent to one another, a contradiction.

Now suppose that $\delta(G) = 3$ and $G$ is regular. Then there is a vertex $v_1$ with degree 3, adjacent to three vertices $w_1, w_2$ and $w_3$, each of degrees at least 7. Consider the triangulation $G'$ obtained by deleting $v_1$. We see that $G'$ has 10 vertices, and so by the previous case, it must have an edge $e$ with density at most 20. At least one end point of $e$ must be one of $w_1, w_2$ or $w_3$, as otherwise $e$ would have density at most 20 in $G$. Since the degrees of  $w_1, w_2$ and $w_3$ in $G'$ are at least 6, the other endpoint of $e$ must be a vertex $v_2$ with degree 3. In $G$, $v_2$ may only be adjacent to vertices of degrees at least 7. We study cases based on how many such vertices there are.

If there are six or more vertices of $G$ with degrees at least 7, then their degrees sum to at least 42. By equation (\ref{grapheqn}), all eleven vertex degrees sum to 54, but since $\delta(G) =3$, we find a contradiction.

If there are 3 vertices of degrees at least 7, then deleting both $v_1$ and $v_2$ produces a triangulation $G''$ with 9 vertices. By a previous case, $G''$ has an edge of density at most 20. This edge must be one whose density was reduced by the deletion, so it must have one vertex which was adjacent to the deleted vertex, i.e. one vertex is $w_1, w_2$ or $w_3$, each of which has degree at least 5 in $G''$. Let $y_3$ denote the other vertex of this edge; $y_3$ must have degree 3 or 4. If degree 3, then $v_1, v_2, y_3$ and $w_1, w_2, w_3$ form a $K_{3,3}$. If $y_3$ has degree 4, then $y_3$ is not adjacent to all of $w_1, w_2$ and $w_3$ (as this would create a $K_{3,3}$), but $y_3$ must be adjacent to four distinct vertices, each of which has degree at least 6. Since at most two of these are among the $w_i$, there are two more vertices $z_1, z_2$, which by assumption must both have degree 6. The degrees of the vertices $v_1, v_2, y_3, w_1, w_2, w_3, z_1$ and $z_2$ sum to at least 43. Equation (\ref{grapheqn}) then gives that one of the remaining three vertices, $v_3$, must have degree 3. But then $v_3$ must be adjacent to $w_1, w_2$ and $w_3$, forming a $K_{3,3}$.

If there are 4 vertices of degree at least 7, suppose $v_2$ is adjacent to $w_1$, $w_2$, and $w_4$. The remaining five vertices have degree which sum to at most 20, so at most two may have degree 5 or larger, and so at least three have degree 3 or 4. If one, $v_3$, has degree 3, then $v_3$ must be adjacent to three of the four $w_i$, and in particular to $w_3$, $w_4$ and one of $w_1$ or $w_2$; without loss of generality, suppose $v_3$ is adjacent to $w_1$. But then $w_1$ is adjacent to $v_1$, $w_2$, $v_2$, $w_4$, $v_3$ and $w_3$, and each of these is adjacent to its predecessor and successor in the list, including $w_3$ adjacent to $v_1$. So, if $w_1$ is adjacent to a seventh vertex, that vertex would be adjacent to one of the $v_i$, contradicting the assumption that these have degree 3. But then $w_1$ cannot have degree 7.

If there are 5 vertices, $w_1$ through $w_5$, of degree at least 7, then by equation (\ref{grapheqn}), the other six vertex degrees sum to at most 19, so either there are six degree 3 vertices, or five of degree 3 and one of degree 4. Each vertex of degree 7 must be adjacent to at most three degree 3 vertices, and any vertex of degree 8 to at most four, else two degree 3 vertices would be adjacent to one another. Delete five degree 3 vertices. The resulting graph $H$ is a 6-vertex triangulation where the $w_i$ all have degree at least 4 and there is one remaining low-degree vertex from $G$ which has degree 3 or 4. By equation (\ref{grapheqn}), the degrees of the vertices are either all 4, or there are four vertices with degree 4 and one each of degrees 3 and 5. If all 4's, one checks that the octahedron has no choice of five faces which if stellated produce a graph like $G$ was assumed to be. If degree 3 and 5, then deleting the degree 3 vertex creates a 5-vertex triangulation $H'$ with no vertices of degree less than 3; there is only one such and it is the triangulation discussed above in the case $n=5$. But stellating any five of the six faces of this triangulation cannot produce a triangulation with the properties $G$ was assumed to have, because one of the degree 3 vertices of $H'$ must only have two of its three incident faces stellated, meaning that the resulting triangulation will have an edge of density $3\times5 = 15$. \end{proof}

If $\delta(G) =2$ with no loops, then we have a vertex $v_1$ of degree 2, which must be adjacent to two distinct vertices $w_1$ and $w_2$ of degrees at least 11. By equation (\ref{grapheqn}), the remaining eight vertices have degrees which sum to at most 30, so either there is another degree 2 vertex, or there are at least two degree 3 vertices. If there is another vertex $v_2$ of degree 2, then either Lemma \ref{adjtriangles} gives an element of trace 14, or there is a third vertex $w_3$ of degree at least 11. There cannot be a fourth, since then the minimum sum of vertex degrees is 58, a contradiction. The three $w_i$ have degrees which sum to at least 33, meaning that the other eight vertex degrees sum to at most 21. If there are only three degree 2 vertices, then there are five of degree 3; each of these must be adjacent to only $w_1$, $w_2$, and $w_3$, meaning there must be a $K_{3,3}$. If there are more than three degree 2 vertices, two must be adjacent to the same pair of the $w_i$, and them Lemma \ref{adjtriangles} gives a trace 14 element. Suppose now there is only one degree 2 vertex. By equation (\ref{grapheqn}), there are at least two degree 3 vertices, $y_1$ and $y_2$. Each must be adjacent only to vertices of degree at least 7. Thus, there must be a third vertex $z_1$ of degree at least 7. Suppose that there is a third vertex $y_3$ of degree 3. Then if there is no fourth vertex of degree at least 7, each vertex of degree 3 must be adjacent to each of those three, creating a $K_{3,3}$. So, suppose there is a fourth vertex $z_2$ with degree at least 7. Then the degrees of $w_1, w_2, z_1$ and $z_2$ sum to at least 36, so the remaining seven vertex degrees sum to at most 18. But this implies that there is a second vertex of degree 2, a contradiction. Finally, suppose there is no fourth vertex of degree at least 7, and therefore no third vertex of degree 3, so all other vertices aside from $v_1$, $z_1$ and $z_2$ have degree at least 4. But the degrees of $w_1, w_2, z_1, v_1, y_1$ and $y_2$ sum to at least 37, so this contradicts equation (\ref{grapheqn}). 

If $\delta(G) \geq 2$ and there are loops, then the maximal degree of a vertex is 34, so Lemma \ref{lem:loops} gives an element of trace at most 17.

If $\delta(G) = 1$, then the maximum degree of a vertex in $G$ is 44. If the maximum degree is 37 or below, Lemma \ref{lem:loops} gives a hyperbolic element of trace at most 18. If 38 or above, then there are at least 4 degree 1 vertices, and either two enclosed by the same triangle -- giving a trace 7 element as in the proof in the case $n=10$ -- or one has a triangle where the adjacent triangle has a degree 2 or degree 3 vertex, and Lemma \ref{lem:deg1s} gives a trace 6 or trace 10 hyperbolic element. 

With $11$ cusps, the systole among arithmetic examples is at most $2\arccosh{(9)}$. \\

In Figure \ref{fig:long11}, an example of an 11-vertex triangulation which achieves this systole is given, via six edges of density 20.

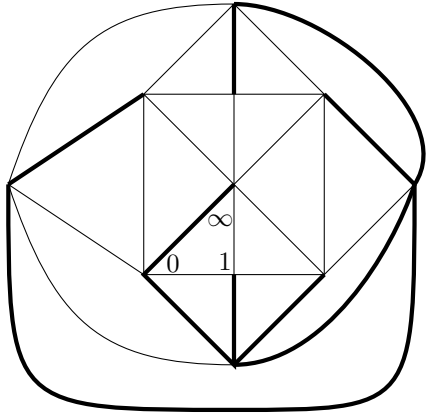
\begin{figure}[htb]
\begin{center}
\begin{tikzpicture}[scale=0.6]
\draw (-2,-2) -- (-2,2) -- (2,2) -- (2,-2) -- (-2,-2);
\draw (0,-2) -- (0,2);
\draw (-2,-2) -- (2,2);
\draw (-2,2) -- (2,-2);
\draw (2,2) -- (4,0) -- (2,-2);
\draw (-2,-2) -- (0,-4) -- (2,-2);
\draw (0,-2) -- (0,-4);
\draw (2,2) -- (0,4) -- (-2,2);
\draw (0,2) -- (0,4);
\draw (-2,-2) -- (-5,0) -- (-2,2); 

\draw[ultra thick] (0,0) -- (-2,-2) -- (0,-4) -- (0,-2);
\draw[ultra thick] (2,-2) -- (0,-4);
\draw[ultra thick] (0,4) -- (0,2);
\draw[ultra thick] (4,0) -- (2,2);
\draw[ultra thick] (-5,0) -- (-2,2);

\draw (-5,0) .. controls (-4,3) and (-3,4) .. (0,4);
\draw (-5,0) .. controls (-4,-3) and (-3,-4) .. (0,-4);
\draw[ultra thick] (0,-4) .. controls (2,-4) and (3.5,-1.5) .. (4,0);
\draw[ultra thick] (0,4) .. controls (2,4) and (5,1.5) .. (4,0);
\draw[ultra thick] (-5,0) .. controls (-5,-5) .. (0,-5);
\draw[ultra thick] (4,0) .. controls (4,-5) .. (0,-5);

%
%
%
\draw (-0.3,-0.8) node {$\infty$};
\draw (-1.35,-1.75) node {$0$};
\draw (-0.2,-1.7) node {$1$};

\end{tikzpicture}
\caption{There are six edges of density 20}
\label{fig:long11}
\end{center}
\end{figure}

Though Figure \ref{fig:long11} provides a fundamental domain, we will instead use the (more complicated) Ford fundamental domain obtained from the same generators. This is shown in Figure \ref{fig:ford11}, with generators listed below.

\begin{figure}
\begin{center}

\includegraphics[scale=0.3]{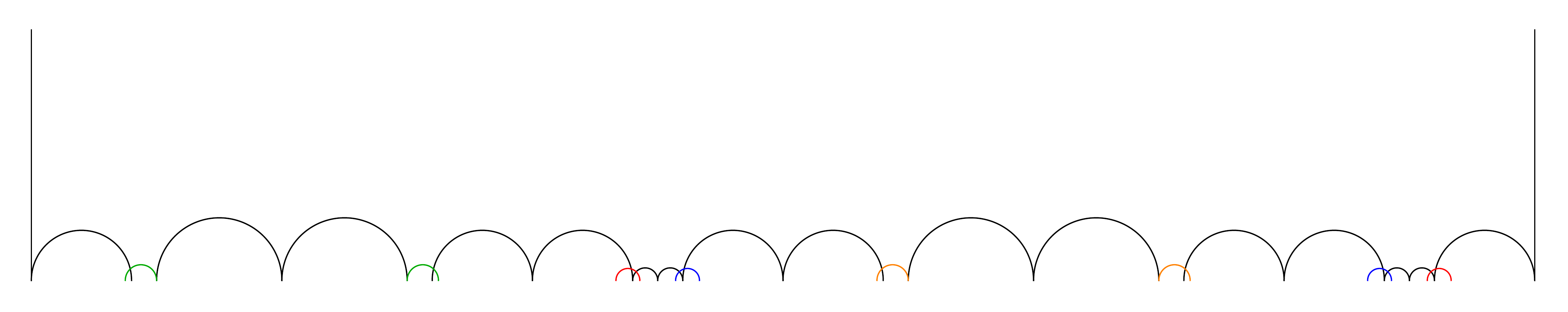}

\caption{Ford domain for the 11-cusp arithmetic group $\Gamma$}
\label{fig:ford11}
\end{center}
\end{figure}

\[ \gamma_1 = \begin{pmatrix}1 & 6 \\ 0 & 1\end{pmatrix} \ \ \ \  \gamma_2 = \begin{pmatrix}-29 & 6 \\ -5 & 1\end{pmatrix} \ \ \ \  \gamma_3 = \begin{pmatrix}5 & -4 \\ 4 & -3\end{pmatrix} \ \ \ \  \gamma_4 = \begin{pmatrix}11 & -20 \\ 5 & -9\end{pmatrix} \]
\[  \gamma_5 = \begin{pmatrix}16 & -45 \\ 5 & -14\end{pmatrix} \ \ \ \  \gamma_6 = \begin{pmatrix}17 & -64 \\ 4 & -15\end{pmatrix} \ \ \ \  \gamma_7 = \begin{pmatrix}26 & -125 \\ 5 & -24\end{pmatrix} \]
\[  \gamma_8 = \begin{pmatrix}25 & -11 \\ 16 & -7\end{pmatrix} \ \ \ \  \gamma_9 = \begin{pmatrix}-73 & 251 \\ -16 & 55\end{pmatrix} \ \ \ \  \gamma_{10} = \begin{pmatrix}-49 & 125 \\ -20 & 51\end{pmatrix} \]
\[  \gamma_{11} = \begin{pmatrix}111 & -605 \\ 20 & -109\end{pmatrix} \ \ \ \  \gamma_{12} = \begin{pmatrix}-118 & 281 \\ -21 & 50\end{pmatrix} \ \ \ \  \gamma_{13} = \begin{pmatrix}-113 & 296 \\ -21 & 55\end{pmatrix} \]

With these generators, the six words with trace 18 are as follows:
\[ \gamma_8, \gamma_9, \gamma_4 \gamma_3, \gamma_6 \gamma_5, \gamma_7\gamma_6, \gamma_3 \gamma_2^{-1}\gamma_1 .\]

To find a non-arithmetic example with longer systole, and setting
\[ \tau = \begin{pmatrix}1 & 1/100 \\ 0 & 1\end{pmatrix}, \]

we use the following generators:

\[ \alpha_1 = \begin{pmatrix}1 & 602/100 \\ 0 & 1\end{pmatrix} \ \ \ \  \alpha_2 = \tau^2 \gamma_2 \ \ \ \  \alpha_3 = \begin{pmatrix}503/101 & -40401/10100 \\ 400/101 & -301/101\end{pmatrix} \ \ \ \  \alpha_4 = \tau \gamma_4 \tau^{-1} \]
\[  \alpha_5 = \tau \gamma_5 \tau^{-1} \ \ \ \  \alpha_6 = \begin{pmatrix}1707/101 & -644809/10100 \\ 400/101 & -1505/101\end{pmatrix} \ \ \ \  \alpha_7 = \tau^2 \gamma_7 \tau^{-2} \]
\[  \alpha_8 = \tau \gamma_8 \ \ \  \alpha_9 = \tau^2 \gamma_9 \tau^{-1} \ \ \ \  \alpha_{10} = \tau \gamma_{10} \tau^{-1} \]
\[  \alpha_{11} = \tau^2 \gamma_{11} \tau^{-2} \ \ \ \  \alpha_{12} = \tau^2 \gamma_{12} \tau^{-1} \ \ \ \  \alpha_{13} = \tau^2 \gamma_{13} \tau^{-1} \]

With respect to these, the corresponding words have traces $\pm 454/25 = \pm 18.16$ or $\pm 36361/2020 \approx \pm 18.000495$. We again use Lemma 2.3 of \cite{BLT} to show that there are no shorter loops, so the systole here corresponds to the trace $\pm 36361/2020$ elements, and is longer than that of the arithmetic examples with 11 cusps.

\subsection{$n=12$}  The principal congruence subgroup $\Gamma(5)$ has 12 cusps. The graph obtained as the quotient of the Farey graph by $\Gamma(5)$ is an icosahedron with all edge densities 25; therefore, this surface has 30 systoles, all of length $2\arccosh{(23/2)}$. When $n=12$, the bound given by equation (\ref{bound}) is $4\arccosh{(5/2)}$, and we see that this is equal to $2\arccosh{(23/2)}$. Hence, when $n=12$, Schmutz's bound is realized by $\Gamma(5)$ and the systole length is maximized by this arithmetic group.

\section{Further Questions}

Among the cases considered above, the most obvious question is how long the systole can be when the maximum systole length has not yet been determined. 

Beyond the cases considered here, i.e. for $n>12$ cusps, it remains to be shown for which values of $n$ the systole length is maximized by an arithmetic surface. We suspect that there is some $N \in \mathbb{N}$ such that for all $n>N$, the systole length among hyperbolic punctured spheres with $n$ cusps is maximized by a non-arithmetic surface. This is because it follows from a result of Borodin \cite{Borodin} that every regular planar triangulation $G$ with $\delta(G) \geq 3$ has an edge of density at most 30, but as $n \to \infty$ the systole bound given by Schmutz increases towards a limit which corresponds to edges of density no less than 36. If, furthermore, every non-regular planar triangulation has an edge of density no more than 30, then there should be room to increase the systole length beyond that corresponding to a hyperbolic element of trace 28. It may be that a starting point for this investigation will be those triangulations $G$ whose dual graphs are fullerene graphs, i.e. trivalent planar graphs with each face a pentagon or hexagon. In such triangulations $G$, the vertices of $G$ all have degree 5 or degree 6, and if no two degree 5 vertices are adjacent (i.e. no two pentagons in the dual are adjacent) then the minimal edge density in $G$ will be 30.

\bibliographystyle{amsplain}
\bibliography{sphererefs}

\noindent Department of Mathematics \& Computer Science,\\
Eastern Illinois University,\\
600 Lincoln Avenue,\\
Charleston, IL 61920.\\
Email: gslakeland@eiu.edu\\

\noindent Department of Mathematics \& Computer Science,\\
Eastern Illinois University,\\
600 Lincoln Avenue,\\
Charleston, IL 61920.\\
Email: cyoung2@eiu.edu

\end{document}